\documentclass[12pt,reqno]{article}
\usepackage{amssymb,amscd,amsmath,amsthm,color}
\newtheorem{theorem}{Theorem}[section]
\newtheorem{lemma}[theorem]{Lemma}
\newtheorem{cor}[theorem]{Corollary}
\newtheorem{prop}[theorem]{Proposition}

\usepackage{enumerate,amssymb}
\theoremstyle{definition}

\theoremstyle{remark}

\numberwithin{equation}{section}

\def\bC{\mathbb{C}}

\def\bM{\mathbb{M}}

\newcommand{\sym}{\mathrm{sym}}
\newcommand{\qsym}{\mathrm{qsym}}

\begin{document}
\baselineskip=15pt

\title{Some hybrid matrix triangle inequalities }

\author{ Jean-Christophe Bourin and Eun-Young Lee}

\date{ }

\maketitle

\vskip 10pt\noindent
{\small
{\bf Abstract.} A  recent result due to  Teng Zhang compares the sum of $m$ matrices and the sum of their  quadratic symmetric moduli:
$$
\left\| \sum_{k=1}^m A_k\right\| \le \sqrt{2}  \left\| \sum_{k=1}^m |A_k|_{\qsym}\right\|
$$
for every unitarily invariant norm. Here $|A|_{\qsym}$ is the quadratic mean of $|A|$ and $|A^*|$. We derive operator and eigenvalue refinements of Zhang's inequality from a new polar decomposition for the quadratic symmetric modulus.  For instance, 
$$
\left| \sum_{k=1}^m A_k\right| \le \frac{\sqrt{2}}{2} \left\{ \sum_{k=1}^m \left(|A_k|_{\qsym}+V|A_k|_{\qsym}V^*\right)\right\}
$$
for some unitary matrix $V$. We also establish the polar decomposition  for the maximal modulus  associated with Olson's order,  and  derive, as in the quadratic case, a series of estimates.
\vskip 5pt\noindent
{\it Keywords.} Matrix inequalities, log-majorization,   unitary orbits,  symmetric modulus, Olson's spectral order.
\vskip 5pt\noindent
{\it 2020 Mathematics Subject Classification.}  15A42, 15A60,  47A30, 47A60.
}

\section{Introduction}

In \cite{Zsharp}, Zhang initiated a systematic study of matrix triangle inequalities involving different operator moduli.  We refer to such estimates as hybrid triangle inequalities. This paper aims to refine one of Zhang's theorems, stated in the abstract,  and thereby contribute to the emerging theory of symmetrized moduli. While Zhang's theorem is formulated in terms of weak majorization, our main results provide operator inequalities involving unitary orbits. Consequently, they yield refined eigenvalue and norm estimates.

  We denote by $\bM_d$  the space of complex $d\times d$ matrices and $\bM_d^+$  its cone of positive semidefinite matrices. For $A\in\bM_d$, we write $|A|=(A^*A)^{1/2}$  for its absolute value (or right modulus).  If $A\in\bM_d^+$, we denote by 
$\lambda_1^{\downarrow}(A)\ge \cdots\ge  \lambda_d^{\downarrow}(A)$  its eigenvalues arranged  in   nonincreasing order. For  $A,B\in\bM_d^+$, the weak majorization, or submajorization, relation $A\prec_w B$ means that
 \begin{equation}\label{submajdef}
\sum_{j=1}^k\lambda_j^{\downarrow}(A) \le  \sum_{j=1}^k\lambda_j^{\downarrow}(B), \qquad k=1,\ldots, d.
\end{equation}
We   adhere to the convention that, for $j>d$ and $A\in\bM_d^+$, $\lambda_j^{\downarrow}(A):=0$. 
A basic property of symmetric norms (i.e., unitarily invariant ones) on $\bM_d$ is Fan's dominance principle:  if $A,B\in\bM^+_d$, then
$$
\| A\| \le \| B\| \quad {\mathrm{for \ all  \ symmetric \ norms}} \iff |A|\prec_w |B|.
$$
We refer to \cite{HiP} for a background and details on majorization, symmetric norms, and the matrix geometric mean.

For a matrix $Z\in\bM_d$, its left and right moduli $|Z^*|$ and $|Z|$ are the positive parts in the polar decompositions
$$
Z=|Z^*|U=|Z^*|^{1/2}U|Z|^{1/2}=U|Z|.
$$
These moduli occur in a number of important operator inequalities. Since neither modulus enjoys an intrinsic preference, it is natural to consider symmetrized versions obtained by averaging them.
The symmetric modulus and the quadratic symmetric modulus of $Z\in\bM_d$ are respectively defined as
$$
|Z|_{\sym}:= \frac{|Z|+|Z^*|}{2}, \qquad
|Z|_{\qsym}:=\sqrt{\frac{|Z|^2+|Z^*|^2}{2}}.
$$
Note that $$|Z|_{\qsym}=\sqrt{|{\mathrm{Re\,}}Z|^2+ |{\mathrm{Im\,}}Z|^2}$$
and
 \begin{equation}\label{compsqs}
|Z|_{\sym}\le |Z|_{\qsym}
\end{equation} 
by operator concavity of $\sqrt{t}$.  The quadratic symmetric modulus enjoys remarkably strong subadditivity properties. A striking result of Zhang \cite[Theorem 1.9]{Ztriang} states that:

\vskip 5pt\noindent
\begin{theorem}\label{thZ}  Let $A,B\in\bM_d$. Then, for some unitaries $U,V\in\bM_d$,
$$
|A+B|_{\qsym} \le U|A|_{\qsym}U^* +V|B|_{\qsym}V^*.
$$
\end{theorem}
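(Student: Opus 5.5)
We would reduce the statement to a known subadditivity result for the usual modulus, applied to a larger block matrix. The key observation is that $|Z|_{\qsym}^2=\frac{1}{2}(Z^*Z+ZZ^*)$. So $|Z|_{\qsym}$ is, up to a unitary congruence, the modulus of a matrix in $\bM_{2d}$ built from $Z$ and $Z^*$. Set
$$
\hat Z:=\frac{1}{\sqrt 2}\begin{pmatrix} Z & 0\\ Z^* & 0\end{pmatrix}\in\bM_{2d}.
$$
Then
$$
\hat Z^*\hat Z=\begin{pmatrix} \frac{Z^*Z+ZZ^*}{2} & 0\\ 0&0\end{pmatrix},
\qquad
|\hat Z|=|Z|_{\qsym}\oplus 0 .
$$
The map $Z\mapsto \hat Z$ is real-linear, since $Z\mapsto Z^*$ is conjugate-linear, so in particular it is additive. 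Hence $\widehat{A+B}=\hat A+\hat B$.

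We then invoke the classical Thompson-type triangle inequality for moduli (Thompson, also the Bourin--Lee unitary orbit form): for $X,Y\in\bM_n$ there are unitaries $U_0,V_0\in\bM_n$ such that
$$
|X+Y|\le U_0|X|U_0^*+V_0|Y|V_0^*.
$$
Applying this with $X=\hat A$ and $Y=\hat B$ in $\bM_{2d}$ gives
$$
|A+B|_{\qsym}\oplus 0\le U_0\bigl(|A|_{\qsym}\oplus 0\bigr)U_0^*+V_0\bigl(|B|_{\qsym}\oplus 0\bigr)V_0^*.
$$

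The main obstacle is compressing this $2d\times 2d$ inequality back to $\bM_d$ while keeping unitary conjugations of $d\times d$ matrices. The right-hand side has rank up to $2d$, so taking the top-left corner only produces compressions $W^*|A|_{\qsym}W$ with $W$ an isometry-like block, not a unitary conjugation. We would handle this by passing to eigenvalues, which is enough:
\begin{itemize}
\item By Weyl monotonicity, $\lambda_j^\downarrow(|A+B|_{\qsym})\le \lambda_j^\downarrow(S)$ for $j\le d$, where $S$ denotes the right-hand side of the dilated inequality.
\item We need a statement of the form: if $P\in\bM_d^+$ satisfies $P\oplus 0\le U_0(A'\oplus0)U_0^*+V_0(B'\oplus0)V_0^*$, then $P\le UA'U^*+VB'V^*$ for some unitaries $U,V\in\bM_d$.
\end{itemize}

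For the second point, the cleanest route is to use the sharper form of Thompson's argument directly, rather than the dilation. Write $\hat A+\hat B=W|\hat A+\hat B|$ with $W$ a partial isometry. Then
$$
|\hat A+\hat B|=W^*\hat A+W^*\hat B\le \mathrm{Re}(W^*\hat A)+\mathrm{Re}(W^*\hat B)
$$
after symmetrizing. Each real part $\mathrm{Re}(W^*\hat A)$ is compressed to the top-left $d\times d$ corner, where it is dominated in the eigenvalue sense by $|A|_{\qsym}$, since $\lambda_j(\mathrm{Re}\,T)\le\lambda_j(|T|)$ for the Fan--Hoffman inequality. The Fan--Hoffman inequality yields a unitary $U\in\bM_d$ with $\mathrm{corner}(\mathrm{Re}\,W^*\hat A)\le U|A|_{\qsym}U^*$. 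Here we use that the compression of $W^*\hat A$ has singular values bounded by those of $\hat A$, which are $\lambda(|A|_{\qsym})$.

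Doing the same for $B$ and adding gives the theorem. The delicate step is verifying that the corner compression of $\mathrm{Re}(W^*\hat A)$ is still dominated by the unitary orbit of $|A|_{\qsym}$. This follows because it equals $\mathrm{Re}$ of a contraction times $\hat A$, compressed to $d$ dimensions. Cauchy interlacing together with Fan--Hoffman then give the required eigenvalue bounds, and eigenvalue domination of a Hermitian by a positive matrix upgrades to an operator inequality up to unitary conjugation.
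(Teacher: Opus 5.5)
Your argument is correct. Note that the paper gives no proof of this statement: it is quoted from Zhang \cite{Ztriang} as background, so there is no in-paper argument to compare against and your proposal stands on its own.

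The key idea is the real-linear dilation $\hat Z$ with $|\hat Z|=|Z|_{\qsym}\oplus 0$; equivalently, $|Z|_{\qsym}$ is the modulus of the $2d\times d$ column $\frac{1}{\sqrt2}\binom{Z}{Z^*}$. This turns the statement into Thompson's inequality. Your second route is complete:
\begin{enumerate}
\item write $|\hat A+\hat B|=\mathrm{Re}(W^*\hat A)+\mathrm{Re}(W^*\hat B)$, which is an equality rather than ``$\le$'';
\item compress to the $(1,1)$ corner;
\item apply Fan--Hoffman, $\lambda_j(\mathrm{Re}\,T)\le s_j(T)\le s_j(\hat A)=\lambda_j(|A|_{\qsym})$, where $T$ is the corner of $W^*\hat A$;
\item upgrade this eigenvalue domination to an operator inequality against a unitary conjugate of $|A|_{\qsym}$.
\end{enumerate}

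The ``main obstacle'' you raised for the first route is illusory. The $(1,1)$ block of $U_0(X\oplus 0)U_0^*$ is $KXK^*$, where $K$ is the $(1,1)$ block of $U_0$ and hence a contraction. Then $\lambda_j(KXK^*)=\lambda_j(X^{1/2}K^*KX^{1/2})\le\lambda_j(X)$ gives $KXK^*\le UXU^*$ for some unitary $U\in\bM_d$ immediately. So Thompson's inequality used as a black box already finishes the proof, and the Weyl-monotonicity step is unnecessary.

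Compared with the paper's own machinery: the block positivity $\begin{bmatrix}\sqrt2|A|_{\qsym}&A\\A^*&\sqrt2|A|_{\qsym}\end{bmatrix}\ge 0$ from Lemma \ref{polarq} is tailored to bounding the ordinary modulus $|\sum_k A_k|$. It necessarily carries the factor $\sqrt2$ and a geometric mean. Your dilation instead represents $|Z|_{\qsym}$ exactly as an ordinary modulus of an additive function of $Z$. That is precisely what makes the constant-one, $\qsym$-to-$\qsym$ inequality reachable directly from the classical Thompson inequality.
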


\vskip 5pt

This   is  the symmetrized companion of the celebrated  Thompson's inequality \cite{T}.  Theorem \ref{thZ}
    was motivated by  the following  special case   \cite{BLsym}:

\vskip 5pt
\begin{cor}\label{symq} Let $Z\in\bM_d$. Then, for some unitaries $U,V\in\bM_d$,
$$
 |Z|_{\qsym} \le U |{\mathrm{Re\,}}Z| U^* + V |{\mathrm{Im\,}}Z|V^*.
$$
\end{cor}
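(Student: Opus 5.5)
Corollary \ref{symq} follows from Theorem \ref{thZ} by writing $Z$ as the sum of its Hermitian and skew-Hermitian parts,
$$
Z = A + B, \qquad A := {\mathrm{Re\,}}Z = \frac{Z+Z^*}{2}, \qquad B := i\,{\mathrm{Im\,}}Z = \frac{Z-Z^*}{2}.
$$
Theorem \ref{thZ} then provides unitaries $U,V\in\bM_d$ with
$$
|Z|_{\qsym} = |A+B|_{\qsym} \le U|A|_{\qsym}U^* + V|B|_{\qsym}V^*.
$$
It remains to identify the two quadratic symmetric moduli on the right-hand side.

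For the first term, $A$ is Hermitian, so $A^*A = AA^* = A^2$, hence $|A| = |A^*| = |{\mathrm{Re\,}}Z|$. Therefore
$$
|A|_{\qsym} = \sqrt{\frac{|A|^2+|A|^2}{2}} = |A| = |{\mathrm{Re\,}}Z|.
$$

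For the second term, $B = i\,{\mathrm{Im\,}}Z$ is skew-Hermitian, hence normal, and
$$
B^*B = BB^* = ({\mathrm{Im\,}}Z)^2 .
$$
Thus $|B| = |B^*| = |{\mathrm{Im\,}}Z|$, and in the same way $|B|_{\qsym} = |{\mathrm{Im\,}}Z|$.

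Substituting both identities gives
$$
|Z|_{\qsym} \le U|{\mathrm{Re\,}}Z|U^* + V|{\mathrm{Im\,}}Z|V^*,
$$
which is the claim. The only point needing care is that $|X|_{\qsym} = |X|$ for every normal $X$, applied here to the skew-Hermitian summand $i\,{\mathrm{Im\,}}Z$.
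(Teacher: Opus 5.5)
Your proof is correct and matches the paper's approach: the paper presents this corollary as the special case of Theorem \ref{thZ} with $A={\mathrm{Re\,}}Z$ and $B=i\,{\mathrm{Im\,}}Z$. Your observation that $|X|_{\qsym}=|X|$ for normal $X$, applied to these Hermitian and skew-Hermitian parts, is exactly the identification that reduction requires.
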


\vskip 5pt
Here \eqref{compsqs} shows that the left-hand side may be replaced by $|Z|_{\sym}$. 
Despite  this observation Theorem \ref{thZ} does not hold for  the symmetric modulus $|\cdot|_{\sym}$.   Two matrices $A,B\in\bM_3$ are given in \cite{Zsharp} with $$\lambda_1^{\downarrow}(|A+B|_{\sym})=\sqrt{2} \left\{ \lambda_1^{\downarrow}(|A|_{\sym})+\lambda_1^{\downarrow}( |B|_{\sym})\right\}.$$ Here, in dimension $d\ge 3$,  the constant $\sqrt{2}$ is optimal. Indeed,   a result of \cite{BLtriang} states a submajorization form of the triangle inequality as follows.

\vskip 5pt
\begin{theorem}\label{BLsym}   Let $\{X_k\}_{k=1}^m$ be in $\bM_d$. Then, 
$$
\left|\sum_{k=1}^m X_k\right|_{\sym} \prec_w \sqrt{2} \sum_{k=1}^m \left|X_k\right|_{\sym}.
$$
\end{theorem}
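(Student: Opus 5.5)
The plan is to reduce the statement, via Fan's dominance principle, to Ky Fan partial sums. I would then write those partial sums as a linear functional of $Z=\sum_{k=1}^m X_k$ and estimate each summand separately. Fix $k\le d$. Since
$$
\sum_{j=1}^k\lambda_j^{\downarrow}(|Z|_{\sym})=\max_{P}\ {\mathrm{tr}\,} P|Z|_{\sym},
$$
where $P$ runs over rank-$k$ orthogonal projections, it suffices to show ${\mathrm{tr}\,}P|Z|_{\sym}\le \sqrt2\,{\mathrm{tr}\,}QS$ for some rank-$k$ projection $Q$, where $S=\sum_k|X_k|_{\sym}$. Writing the polar decomposition $Z=U|Z|$, we have $|Z|=U^*Z$ and $|Z^*|=ZU^*$. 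Hence
$$
2\,{\mathrm{tr}\,}P|Z|_{\sym}={\mathrm{tr}\,}(PU^*Z)+{\mathrm{tr}\,}(U^*PZ)={\mathrm{tr}\,}(MZ)=\sum_{k=1}^m {\mathrm{tr}\,}(MX_k), \qquad M:=PU^*+U^*P .
$$
The left-hand side is real and nonnegative, so it is bounded by $\sum_k |{\mathrm{tr}\,}(MX_k)|$.

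Next I would estimate each term $|{\mathrm{tr}\,}(MX)|$ using the positivity of the block matrix $\begin{pmatrix}|X^*| & X\\ X^* & |X|\end{pmatrix}$, equivalently the factorization $X=|X^*|^{1/2}W|X|^{1/2}$ with $W$ unitary. Hölder and Cauchy--Schwarz give, for each of the two pieces of $M$, bounds of the form
$$
|{\mathrm{tr}\,}(PU^*X)|\le \left({\mathrm{tr}\,}P|X^*|\right)^{1/2}\left({\mathrm{tr}\,}PU|X|U^*\right)^{1/2},
$$
and similarly for $U^*PX$ with the roles of $P$ and $UPU^*$ swapped. The scalar inequality $\sqrt{ab}+\sqrt{cd}\le \sqrt{(a+c)(b+d)}$ then combines the two pieces. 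Together with AM--QM, the aim is an estimate of the form
$$
|{\mathrm{tr}\,}(MX)|\le \sqrt2\ \big({\mathrm{tr}\,}P|X|_{\sym}+{\mathrm{tr}\,}(UPU^*)|X|_{\sym}\big)^{\!\sim},
$$
where $\sim$ means that the right-hand side involves only the two projections $P$ and $P'=UPU^*$. The factor $\sqrt2$ is exactly the loss in passing from a geometric mean of two traces of $|X|$ and $|X^*|$ to their arithmetic mean $|X|_{\sym}$, taken across two different projections.

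Finally, summing over the $X_k$ gives $2\,{\mathrm{tr}\,}P|Z|_{\sym}\le \sqrt2\,({\mathrm{tr}\,}PS+{\mathrm{tr}\,}P'S)$. Each of these two traces is at most the Ky Fan $k$-sum of $S$, which yields the required bound $\sum_{j\le k}\lambda_j^{\downarrow}(|Z|_{\sym})\le\sqrt2\sum_{j\le k}\lambda_j^{\downarrow}(S)$.

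The main obstacle is the middle step. The naive Cauchy--Schwarz estimate pairs $|X|$ and $|X^*|$ under different projections, for instance $P|X^*|$ with $UPU^*|X|$. AM--GM then only produces ${\mathrm{tr}\,}P|X^*|+{\mathrm{tr}\,}P'|X|$, which is not controlled by $|X|_{\sym}$ under a single projection and would only give the weaker constant $2$ relative to $|X|$. I would first try to regroup the four quantities ${\mathrm{tr}\,}P|X|$, ${\mathrm{tr}\,}P|X^*|$, ${\mathrm{tr}\,}P'|X|$, ${\mathrm{tr}\,}P'|X^*|$ before applying AM--GM, so that each projection sees $|X|+|X^*|$. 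The inequality $\sqrt{ab}+\sqrt{cd}\le\sqrt{(a+d)(b+c)}\le \tfrac{1}{\sqrt2}\cdot\tfrac{(a+b)+(c+d)}{\sqrt2}\cdot\sqrt2$ is the source of the $\sqrt2$.

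If that regrouping fails, I would fall back on the operator route. The numerical-range bound $-|X|_{\sym}\le \mathrm{Re}\,X\le |X|_{\sym}$, which also follows from the block positivity, gives $-S\le \mathrm{Re}\,Z,\ \mathrm{Im}\,Z\le S$. I would then combine this with \eqref{compsqs} and Corollary \ref{symq}, controlling $|Z|_{\qsym}$ through $|\mathrm{Re}\,Z|$ and $|\mathrm{Im}\,Z|$. Here the difficulty is to get $\sqrt2$ rather than $2$, which requires exploiting the quadratic structure $|\mathrm{Re}\,Z|^2+|\mathrm{Im}\,Z|^2$ instead of adding the two moduli.
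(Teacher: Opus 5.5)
The paper gives no proof of this statement; it quotes it from \cite{BLtriang}. Your proposal therefore has to stand on its own, and it does not. The middle step carries all the content and is left open, and the target you set for it is false.

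First, the Cauchy--Schwarz bound you display for $\mathrm{tr}(PU^*X)$ is actually the bound for $\mathrm{tr}(U^*PX)$. As written, for $X=Z$ it would assert $\mathrm{tr}\,P|Z|\le \mathrm{tr}\,P|Z^*|$, which fails in general. Splitting $X=|X^*|^{1/2}W|X|^{1/2}$ correctly gives
\[
|\mathrm{tr}(PU^*X)|\le (\mathrm{tr}\,P'|X^*|)^{1/2}(\mathrm{tr}\,P|X|)^{1/2},\qquad |\mathrm{tr}(U^*PX)|\le (\mathrm{tr}\,P|X^*|)^{1/2}(\mathrm{tr}\,P''|X|)^{1/2},
\]
with $P'=UPU^*$ and $P''=U^*PU$. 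So three projections occur, not two.

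Second, the $X_k$ are arbitrary (take $X_1=X$, $X_2=Z-X$), so a per-summand bound must hold for every $X$. No bound using only $P$ and $UPU^*$ can do so, whatever the constants. Take $d=3$, $k=1$, $U$ the cyclic shift $e_1\mapsto e_2\mapsto e_3\mapsto e_1$, and $Z=U\,\mathrm{diag}(2,1,2)$. Then $|Z|_{\sym}=\mathrm{diag}(2,3/2,3/2)$, $P=e_1e_1^*$ and $M=e_1e_2^*+e_3e_1^*$. For $X=(e_1+Ne_3)e_3^*$ you get $\mathrm{tr}(MX)=1$, while $\mathrm{tr}\,P|X|_{\sym}=1/(2\sqrt{1+N^2})$ and $\mathrm{tr}\,UPU^*|X|_{\sym}=0$. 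Analogous examples show that none of $P$, $UPU^*$, $U^*PU$ can be dropped. Your fallback route is also only sketched, and as it stands it yields the constant $2$.

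Your framework can nevertheless be completed. The missing idea is an unbalanced AM--GM that puts the weight on the factors carrying $P$:
\[
\sqrt{\alpha\beta}\le \frac{\alpha}{\sqrt2}+\frac{\beta}{2\sqrt2}.
\]
Applying it to both bounds above gives
\[
|\mathrm{tr}(MX)|\le \frac{\mathrm{tr}\,P(|X|+|X^*|)}{\sqrt2}+\frac{\mathrm{tr}\,P'|X^*|+\mathrm{tr}\,P''|X|}{2\sqrt2}\le \sqrt2\,\mathrm{tr}\,P|X|_{\sym}+\frac{\mathrm{tr}\,P'|X|_{\sym}+\mathrm{tr}\,P''|X|_{\sym}}{\sqrt2}.
\]
In the second step, the crude bounds $|X|,|X^*|\le 2|X|_{\sym}$ are used only on the two cross terms. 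Now sum over $X=X_k$ and bound each of $\mathrm{tr}\,PS$, $\mathrm{tr}\,P'S$, $\mathrm{tr}\,P''S$ by $\sum_{j\le k}\lambda_j^{\downarrow}(S)$. This gives $2\,\mathrm{tr}\,P|Z|_{\sym}\le (\sqrt2+\sqrt2)\sum_{j\le k}\lambda_j^{\downarrow}(S)$, which is exactly the claim.

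If you use $\sqrt{\alpha\beta}\le t\alpha/2+\beta/(2t)$ instead, the resulting constant is $(t+2/t)/2$. The equal-weight choice $t=1$ gives only $3/2$, and $t=\sqrt2$ is optimal. So the fix is the asymmetric weighting together with the third projection $U^*PU$, not a regrouping under two projections.
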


In his impressive work \cite{Zsharp},  Zhang got the idea to study triangle inequalities between different types of moduli. In particular, he observed the following hybrid version of Theorem \ref{BLsym}:

\vskip 5pt
\begin{theorem}\label{Zhyb}   Let $\{X_k\}_{k=1}^m$ be in $\bM_d$. Then, 
$$
\left|\sum_{k=1}^m X_k\right| \prec_w \sqrt{2} \sum_{k=1}^m \left|X_k\right|_{\qsym}.
$$ 
\end{theorem}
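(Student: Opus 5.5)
The plan is to split the estimate into two steps: first compare the right modulus of a single matrix with its quadratic symmetric modulus, losing the factor $\sqrt{2}$; then use the subadditivity of $|\cdot|_{\qsym}$ up to unitary orbits given by Theorem \ref{thZ}.

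\emph{Step 1: $|Z|\le\sqrt{2}\,|Z|_{\qsym}$ for every $Z\in\bM_d$.} Since $ZZ^*\ge 0$, we have
$$
|Z|^2=Z^*Z\le Z^*Z+ZZ^* = 2\cdot\frac{|Z|^2+|Z^*|^2}{2}=2|Z|_{\qsym}^2 .
$$
The map $t\mapsto\sqrt t$ is operator monotone, so taking square roots gives $|Z|\le\sqrt2\,|Z|_{\qsym}$. We apply this with $Z=\sum_{k=1}^m X_k$.

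\emph{Step 2: unitary-orbit subadditivity for $m$ summands.} By induction on $m$ we show that there are unitaries $U_1,\dots,U_m$ such that
$$
\left|\sum_{k=1}^m X_k\right|_{\qsym}\le \sum_{k=1}^m U_k|X_k|_{\qsym}U_k^*.
$$
The case $m=2$ is Theorem \ref{thZ}. For the inductive step, apply Theorem \ref{thZ} to $A=\sum_{k<m}X_k$ and $B=X_m$. This gives
$$
\left|\sum_{k=1}^m X_k\right|_{\qsym}\le U|A|_{\qsym}U^*+V|X_m|_{\qsym}V^*.
$$
Next insert the induction hypothesis for $|A|_{\qsym}$. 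Conjugation by $U$ preserves the L\"owner order, so the resulting unitaries are $UU_k$ for $k<m$, together with $U_m=V$.

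\emph{Step 3: pass to weak majorization.} Combining Steps 1 and 2 yields the operator inequality
$$
\left|\sum_k X_k\right|\le\sqrt2\sum_k U_k|X_k|_{\qsym}U_k^*.
$$
The L\"owner order implies eigenvalue domination, by Weyl's monotonicity principle. Ky Fan's maximum principle shows that $\sum_{j\le r}\lambda_j^{\downarrow}(\cdot)$ is subadditive on $\bM_d^+$ and invariant under unitary congruence. Therefore
$$
\sum_{j\le r}\lambda_j^{\downarrow}\left(\left|\sum_k X_k\right|\right)\le \sqrt2\sum_{j\le r}\lambda_j^{\downarrow}\left(\sum_k|X_k|_{\qsym}\right)
$$
for all $r$, which is the claimed relation $\prec_w$.

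There is no serious obstacle once Theorem \ref{thZ} is granted. The only point needing care is to keep the L\"owner order through the induction, by conjugating both sides rather than applying majorization too early. That ordering is also what should give the operator refinement stated in the abstract.
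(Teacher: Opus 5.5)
\paragraph{Step 3 does not follow.}
Your Steps 1 and 2 are correct, but the passage to weak majorization is invalid. Subadditivity and unitary invariance of the Ky Fan functionals, applied to $\sqrt2\sum_k U_k|X_k|_{\qsym}U_k^*$, only give
\[
\sum_{j\le r}\lambda_j^{\downarrow}\Bigl(\Bigl|\sum_k X_k\Bigr|\Bigr)\le \sqrt2\sum_{k=1}^m\sum_{j\le r}\lambda_j^{\downarrow}\bigl(|X_k|_{\qsym}\bigr).
\]
The right-hand side dominates $\sqrt2\sum_{j\le r}\lambda_j^{\downarrow}(\sum_k|X_k|_{\qsym})$, not the other way round. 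So subadditivity runs in the wrong direction for the comparison you need.

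Separately chosen unitaries lose all information about the actual sum. For example, take $P_1=e_1e_1^*$, $P_2=e_2e_2^*$ and $U$ swapping $e_1,e_2$. Then $\lambda_1^{\downarrow}(P_1+UP_2U^*)=2$, while $\lambda_1^{\downarrow}(P_1+P_2)=1$.

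What your argument actually reaches is $\|\sum_kX_k\|\le\sqrt2\sum_k\||X_k|_{\qsym}\|$ for symmetric norms. That already follows from the ordinary triangle inequality and Step 1 applied to each $X_k$ separately, so Theorem \ref{thZ} buys nothing here. To rescue your route you would need $|\sum_kX_k|_{\qsym}\prec_w\sum_k|X_k|_{\qsym}$, and Theorem \ref{thZ} does not give that. For the same reason your closing remark is off. The refinement in the abstract uses one unitary $V$ acting on every $|A_k|_{\qsym}$, which iterating Theorem \ref{thZ} cannot produce.

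\paragraph{The missing idea.}
You need a certificate that is additive in the summands. Lemma \ref{polarq} gives $X_k=\sqrt2\,|X_k|_{\qsym}^{1/2}C_k|X_k|_{\qsym}^{1/2}$ with $\|C_k\|_\infty\le1$, that is,
\[
\begin{bmatrix}\sqrt2|X_k|_{\qsym} & X_k\\ X_k^* & \sqrt2|X_k|_{\qsym}\end{bmatrix}\ge0 .
\]
Its proof uses both $|X_k|^2\le2|X_k|_{\qsym}^2$ (your Step 1) and $|X_k^*|^2\le2|X_k|_{\qsym}^2$, combined through the factorization $C=XY$.

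The paper then proceeds as follows.
\begin{itemize}
\item Summing over $k$ keeps $S=\sum_k|X_k|_{\qsym}$ intact in both diagonal corners.
\item Conjugate by $U^*\oplus I$, where $\sum_kX_k=U|\sum_kX_k|$.
\item The maximal characterization of $\#$, followed by the AGM inequality, gives $|\sum_kX_k|\le\frac{\sqrt2}{2}(S+VSV^*)$ with a \emph{single} unitary $V$ (Theorem \ref{main} and Corollary \ref{cormean}).
\end{itemize}
Only at this point does Ky Fan subadditivity yield $\prec_w\sqrt2\,S$, because both terms are unitarily equivalent to the same matrix $S$.
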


Here both the usual  modulus and the quadratic symmetric modulus appear simultaneously. The constant $\sqrt{2}$ is optimal in any dimension $d\ge 2$.  Theorem \ref{Zhyb} is remarkable if one keeps in mind the well known submajorization \cite[Proposition 2.1]{L}:

\vskip 5pt
\begin{prop}\label{Leeineq}   Let $\{X_k\}_{k=1}^m$ be in $\bM_d$. Then, 
$$
\left|\sum_{k=1}^m X_k\right| \prec_w \sqrt{m} \sum_{k=1}^m \left|X_k\right|
$$ 
\end{prop}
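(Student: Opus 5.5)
Writing $X=\sum_{k=1}^m X_k$, I would reduce the statement to the single inequality
$$
\left|X\right|^2 \le m\sum_{k=1}^m |X_k|^2 ,
$$
and then pass to the sum of the moduli using Thompson-type estimates and the Ky Fan characterization \eqref{submajdef}.

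The first step is the operator inequality $|X|^2\le m\sum_k |X_k|^2$. Since $X^*X=\sum_{j,k}X_j^*X_k$, and
$$
\sum_{j,k}(X_j-X_k)^*(X_j-X_k)= 2m\sum_k X_k^*X_k-2X^*X\ge 0,
$$
we get $X^*X\le m\sum_k X_k^*X_k$. This is the operator convexity of $t\mapsto t^2$, applied to $|\cdot|$ through the column trick.

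The second step takes square roots. Because $\sqrt{t}$ is operator monotone, it gives $|X|\le \sqrt m\,\bigl(\sum_k |X_k|^2\bigr)^{1/2}$. It remains to show
$$
\Bigl(\sum_k |X_k|^2\Bigr)^{1/2}\prec_w \sum_k |X_k|.
$$
For this I use the column matrix $C$ whose blocks are $|X_1|,\dots,|X_m|$, so that $C^*C=\sum_k|X_k|^2$ and $(C^*C)^{1/2}=|C|$. Write $C=\sum_k E_k|X_k|$, where $E_k$ is the isometry onto the $k$-th block. Thompson's inequality for the sum gives $|C|\le \sum_k U_k\bigl|E_k|X_k|\bigr|U_k^*=\sum_k U_k|X_k|U_k^*$ for some unitaries $U_k$, after compressing to the relevant $d\times d$ corner. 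Alternatively, the Ky Fan sums are subadditive and unitarily invariant, and $|E_k|X_k||=|X_k|$. In either form, since the $k$-largest eigenvalue sum is subadditive and invariant under unitary conjugation, we obtain $\sum_{j\le k}\lambda_j^\downarrow(|C|)\le \sum_{j\le k}\lambda_j^\downarrow(\sum_k|X_k|)$ for each $k$.

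The third step combines the two. Operator monotonicity gives $\lambda_j^\downarrow(|X|)\le \sqrt m\,\lambda_j^\downarrow(|C|)$, so
$$
|X|\prec_w \sqrt m\,|C|\prec_w \sqrt m\sum_k|X_k|.
$$
The main point needing care is the second step. The cleanest justification is the singular value triangle inequality $\sum_{j\le k}s_j(\sum_k Y_k)\le \sum_{j\le k}\sum_k s_j(Y_k)$, applied to $C$ viewed as an $md\times d$ matrix. Then I compare $\sum_k s_j(Y_k)$ with the eigenvalues of $\sum_k|X_k|$. That comparison fails termwise, so Thompson's unitary-orbit form is the tool to rely on. It also follows from the concavity route, since $\bigl(\sum_k A_k^2\bigr)^{1/2}\prec_w\sum_k A_k$ for positive $A_k$: both sides are $\le$ in trace norm sense via $\|\cdot\|_{(k)}$ subadditivity of the map $C\mapsto|C|$.
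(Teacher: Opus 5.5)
The paper gives no proof of this proposition; it is quoted from \cite[Proposition 2.1]{L}. Your reduction is sound: $|X|^2\le m\sum_k|X_k|^2$ is correct, and operator monotonicity of $\sqrt t$ gives $|X|\le\sqrt m\,|C|$ with $|C|=(\sum_k|X_k|^2)^{1/2}$.

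\textbf{The gap.} All of the non-commutative content sits in the remaining claim $|C|\prec_w\sum_k|X_k|$, and none of your justifications proves it. Thompson's inequality gives $|C|\le\sum_kU_k|X_k|U_k^*$ with \emph{different} unitaries $U_k$. Write $\|Z\|_{(p)}=\sum_{j\le p}s_j(Z)$ for the Ky Fan norms. Subadditivity and unitary invariance of these norms then yield only
\[
\|C\|_{(p)}\le\sum_{k=1}^m\|X_k\|_{(p)},
\]
which is the same bound your singular-value triangle inequality gives. Its right-hand side \emph{dominates} $\|\sum_k|X_k|\|_{(p)}$, so it points the wrong way. The termwise failure you noticed is not repaired by the unitary-orbit form: $\|\sum_kU_kA_kU_k^*\|_{(p)}$ can exceed $\|\sum_kA_k\|_{(p)}$. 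For instance, take $A_1=e_1e_1^*$, $A_2=e_2e_2^*$ in $\bM_2$, $U_1=I$ and $U_2$ the swap. Then $U_1A_1U_1^*+U_2A_2U_2^*=2e_1e_1^*$, whereas $A_1+A_2=I$. Your closing sentence about ``subadditivity of $C\mapsto|C|$'' merely restates the claim.

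\textbf{A way to close it.} The claim is true. Put $A_k=|X_k|$, $S=\sum_kA_k$ and $D=A_1\oplus\cdots\oplus A_m$, and fix $p$. Truncating a singular value decomposition of $C$, choose an $md\times d$ contraction $Y$ of rank at most $p$, with $d\times d$ blocks $Y_k$, such that
\[
\|C\|_{(p)}=\mathrm{Tr}\,Y^*C=\sum_k\mathrm{Tr}\,Y_k^*A_k .
\]
Let $P$ be the projection onto the range of $Y^*$. Then $Y_k=Y_kP$, so $\mathrm{Tr}\,Y_k^*A_k=\mathrm{Tr}\,(A_k^{1/2}Y_k)^*(A_k^{1/2}P)$. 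The Cauchy--Schwarz inequality gives
\[
\|C\|_{(p)}\le\bigl(\mathrm{Tr}\,PSP\bigr)^{1/2}\bigl(\mathrm{Tr}\,Y^*DY\bigr)^{1/2}\le\|S\|_{(p)}^{1/2}\,\|D\|_{(p)}^{1/2}.
\]
The second inequality is Ky Fan's maximum principle, using that $P$ has rank at most $p$ and that $YY^*$ lies below a projection of rank at most $p$. It remains to show $\|D\|_{(p)}\le\|S\|_{(p)}$. Let $T$ be the column with blocks $A_k^{1/2}$. Then $T^*T=S$, while $D$ is the block-diagonal pinching of $TT^*$. Pinchings do not increase Ky Fan norms, and $TT^*$ and $T^*T$ have the same nonzero eigenvalues. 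Hence $\|C\|_{(p)}\le\|S\|_{(p)}$ for every $p$, that is, $(\sum_k|X_k|^2)^{1/2}\prec_w\sum_k|X_k|$. Your remaining steps then give the proposition.
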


Hence replacing the usual modulus in  the right-hand side  by the quadratic symmetric modulus yields    the pleasant constant $\sqrt{2}$, regardless of the dimension. The constant $\sqrt{m}$ is optimal whenever $m\le d$.  If $m> d$, then the multiplicative constant  $\sqrt{m}$ can be replaced by the optimal one $\sqrt{d}$, see \cite[Theorem 1.7]{Zsharp}.

The purpose of Section 2 is to refine Theorem \ref{Zhyb} at the operator level. Our main tool is a polar decomposition associated with the quadratic symmetric modulus. It yields unitary-orbit inequalities strengthening Zhang's submajorization estimate and leads to several eigenvalue consequences. We also establish, in Section 3,  a polar decomposition for the maximal modulus associated with Olson's spectral order.   This allows us to obtain estimates for the maximal modulus similar to those of Section 2 for the quadratic modulus.

\section{Unitary orbits and  log-majorization }

The following lemma may be regarded as a polar decomposition for the quadratic symmetric modulus.

\vskip 5pt
\begin{lemma}\label{polarq} Let $A\in\bM_d$. Then there exists a contraction $C\in\bM_d$ such that
$$
 A =\sqrt{2}|A|_{\qsym}^{1/2} C |A|_{\qsym}^{1/2}.
$$
\end{lemma}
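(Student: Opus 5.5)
The plan is to reduce the statement to the positivity of a $2\times 2$ block matrix. We use the standard fact that, for $P\in\bM_d^+$ and $X\in\bM_d$,
$$
\begin{pmatrix} P & X\\ X^* & P\end{pmatrix}\ge 0
\iff X=P^{1/2}KP^{1/2}\ \text{for some contraction } K\in\bM_d .
$$
Here we take $P=\sqrt{2}\,|A|_{\qsym}$ and $X=A$. Since $P^{1/2}=2^{1/4}|A|_{\qsym}^{1/2}$, the factorization $A=P^{1/2}KP^{1/2}$ reads $A=\sqrt{2}\,|A|_{\qsym}^{1/2}K|A|_{\qsym}^{1/2}$. So $C=K$ gives the lemma.

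The key step is the positivity itself. By definition,
$$
\bigl(\sqrt{2}\,|A|_{\qsym}\bigr)^2=|A|^2+|A^*|^2,
$$
which dominates both $|A|^2$ and $|A^*|^2$. By operator monotonicity of $t\mapsto\sqrt t$, we get $\sqrt{2}\,|A|_{\qsym}\ge |A|$ and $\sqrt{2}\,|A|_{\qsym}\ge |A^*|$. Hence
$$
\begin{pmatrix} \sqrt{2}|A|_{\qsym} & A\\ A^* & \sqrt{2}|A|_{\qsym}\end{pmatrix}
\ge
\begin{pmatrix} |A^*| & A\\ A^* & |A|\end{pmatrix}\ge 0 .
$$
The last matrix is positive because, with the polar decomposition $A=U|A|$ and $U$ unitary, it equals
$$
\begin{pmatrix} U|A|^{1/2}\\ |A|^{1/2}\end{pmatrix}\begin{pmatrix} |A|^{1/2}U^* & |A|^{1/2}\end{pmatrix},
$$
using $|A^*|=U|A|U^*$.

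The only delicate point is justifying the block-matrix criterion when $P$ is singular; this is the main obstacle, though a routine one. If $P$ is invertible, congruence by $P^{-1/2}\oplus P^{-1/2}$ shows that $K=P^{-1/2}AP^{-1/2}$ satisfies $\begin{pmatrix} I&K\\K^*&I\end{pmatrix}\ge0$, i.e.\ $\|K\|\le1$. In general, there are two routes.

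\begin{enumerate}
\item Replace $P$ by $P+\varepsilon I$, obtain contractions $K_\varepsilon$, and extract a limit point as $\varepsilon\to0$ by compactness of the unit ball of $\bM_d$.
\item Note that positivity forces $\ker P\subset\ker A\cap\ker A^*$. Then define $K$ as $P^{-1/2}AP^{-1/2}$ on the range of $P$ and as $0$ on $\ker P$.
\end{enumerate}

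Either route yields the required contraction $C$.
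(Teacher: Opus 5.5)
Your proof is correct and is essentially the paper's argument in block-matrix form. Your $K=P^{-1/2}AP^{-1/2}$ with $P=(|A|^2+|A^*|^2)^{1/2}$ is exactly the paper's $C$. Moreover, congruence of your positive matrix $\begin{bmatrix}|A^*|&A\\A^*&|A|\end{bmatrix}$ by $P^{-1/2}\oplus P^{-1/2}$ is precisely the paper's splitting $C=XY$ with $XX^*\le I$ and $Y^*Y\le I$; the paper gets these bounds from $t\mapsto t^{-1/2}$ being operator decreasing, you from $P\ge|A^*|$ and $P\ge|A|$.

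The only real difference is the singular case. You regularize $P$ (or use $\ker P\subset\ker A\cap\ker A^*$), whereas the paper perturbs $A$ to an invertible matrix. Your route also yields directly the positive block matrix that the paper then uses in Theorem~\ref{main}.
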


\vskip 5pt
\begin{proof}
By a limit argument we may suppose invertibility of $A$. It then suffices to show that
$$
C=(|A|^2+|A^*|^2)^{-1/4}A(|A|^2+|A^*|^2)^{-1/4}
$$
is a contraction. Note that the polar decomposition $A=|A^*|^{1/2} U|A|^{1/2}$ gives
$$
C=(|A|^2+|A^*|^2)^{-1/4}\left(|A^*|^{1/2} U|A|^{1/2}\right)(|A|^2+|A^*|^2)^{-1/4}.
$$
Thus $C=XY$ where
$$
X=(|A|^2+|A^*|^2)^{-1/4}|A^*|^{1/2}, \qquad Y= U|A|^{1/2}(|A|^2+|A^*|^2)^{-1/4}.
$$
Hence $\|C\|_{\infty}^2\le \|X\|_{\infty}^2\|Y\|_{\infty}^2=\| X^*X\|_{\infty}\|YY^*\|_{\infty}$, that is
$$
\| C\|_{\infty}^2 \le  \| |A^*|^{1/2}(|A|^2+|A^*|^2)^{-1/2}|A^*|^{1/2}\|_{\infty} \| |A|^{1/2}(|A|^2+|A^*|^2)^{-1/2}|A|^{1/2}\|_{\infty}.
$$
Now, since $t\mapsto t^{-1/2}$ is operator decreasing, we have
$$
|A^*|^{1/2}(|A|^2+|A^*|^2)^{-1/2}|A^*|^{1/2} \le |A^*|^{1/2}(|A^*|^2)^{-1/2}|A^*|^{1/2} =I,
$$
and 
$$
|A|^{1/2}(|A|^2+|A^*|^2)^{-1/2}|A|^{1/2} \le |A|^{1/2}(|A|^2)^{-1/2}|A|^{1/2} =I.
$$
The last three inequalities together give
$
\| C\|_{\infty}^2 \le 1.
$
\end{proof}

\vskip 5pt
Now we give a hybrid comparison between the usual modulus and the quadratic symmetric modulus with the help of the geometric mean $\#$ on $\bM_d^+$. The result is strongly influenced by Zhang's  Theorem \ref{Zhyb}.

\vskip 5pt
\begin{theorem} \label{main} Let $\{A_k\}_{k=1}^m$ be in $\bM_d$. Then, 
$$
\left| \sum_{k=1}^m A_k\right| \le \sqrt{2} \left\{ \sum_{k=1}^m |A_k|_{\qsym}\right\} \#V\left\{
 \sum_{k=1}^m |A_k|_{\qsym}\right\}V^*
$$
for some unitary matrix $V\in\bM_d$.
\end{theorem}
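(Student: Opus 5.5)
Write $P_k=|A_k|_{\qsym}$ and $P=\sum_{k=1}^m P_k$. By Lemma \ref{polarq}, each $A_k=\sqrt2\,P_k^{1/2}C_kP_k^{1/2}$ with $C_k$ a contraction. Stack these into block matrices: put $R=[P_1^{1/2},\dots,P_m^{1/2}]$ (a $d\times md$ row) and $C=\mathrm{diag}(C_1,\dots,C_m)$. Then
$$
\sum_{k=1}^m A_k=\sqrt2\, R\,C\,R^*, \qquad RR^*=P, \qquad \|C\|_\infty\le 1 .
$$
So $S:=\sum_k A_k$ has the form $\sqrt2\,R C R^*$ with $C$ a contraction and $RR^*=P$. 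The plan is to show that any matrix of the form $Z=RCR^*$ with $\|C\|_\infty\le1$ satisfies $|Z|\le (RR^*)\#V(RR^*)V^*$ for some unitary $V$.

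The first step is to polar-decompose $R$. Write $R=P^{1/2}W$ where $W$ is a co-isometry, $WW^*=I$ (first assume $P$ invertible, then take a limit). Then $Z=P^{1/2}KP^{1/2}$ with $K=WCW^*$, and $K$ is a contraction in $\bM_d$. The statement is thereby reduced to a one-matrix claim: if $Z=P^{1/2}KP^{1/2}$ with $P\ge0$ and $\|K\|_\infty\le1$, then $|Z|\le P\#(VPV^*)$ for some unitary $V$.

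The second step proves this claim. I use the polar decomposition $Z=U|Z|$. Then
$$
|Z|=U^*Z=U^*P^{1/2}KP^{1/2}.
$$
The block matrix $\begin{pmatrix} U^*PU & |Z|\\ |Z| & P\end{pmatrix}$ is positive semidefinite. Indeed, it equals $T^*\begin{pmatrix} I & K\\ K^* & I\end{pmatrix}T$ with $T=\mathrm{diag}(P^{1/2}U,\,P^{1/2})$, and the middle block matrix is positive because $K$ is a contraction. Since $|Z|$ is Hermitian, the standard characterization of the geometric mean applies: $\begin{pmatrix}X&H\\H&Y\end{pmatrix}\ge0$ with $H=H^*$ implies $H\le X\#Y$. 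Hence $|Z|\le (U^*PU)\#P=P\#(U^*PU)$. This is the claim with $V=U^*$.

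Putting the steps together with the factor $\sqrt2$, and using that $\#$ is positively homogeneous, gives $|S|\le\sqrt2\,P\#(VPV^*)$, which is the theorem.

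The main obstacle is the limit argument when $P$ is singular, since the unitary $V$ depends on the approximation. I would handle it with compactness of the unitary group and continuity of $\#$ along decreasing sequences. Concretely, replace $P$ by $P+\varepsilon I$, so that $R$ is replaced by $[R,\sqrt\varepsilon I]$ and $C$ by $C\oplus 0$. Then extract a convergent subsequence of the unitaries $V_\varepsilon$, and pass to the limit using continuity of the geometric mean as $\varepsilon\downarrow0$.
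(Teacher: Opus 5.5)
Your proof is correct and is essentially the paper's argument. Stacking $R=[P_1^{1/2},\dots,P_m^{1/2}]$, $C=\mathrm{diag}(C_1,\dots,C_m)$ and passing through the co-isometry $W$ only re-derives the positivity of $\begin{bmatrix} P & Z\\ Z^* & P\end{bmatrix}$, which the paper gets directly by summing the positive blocks $\begin{bmatrix}\sqrt2 P_k & A_k\\ A_k^* & \sqrt2 P_k\end{bmatrix}$. Both proofs then conjugate by the polar unitary of $\sum_k A_k$ and apply the maximal characterization of $\#$.

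The singular case you flag as the main obstacle is not one. Your $\varepsilon$-perturbation leaves $Z=RCR^*$, and hence $U$, unchanged, so $V=U^*$ is fixed and $(P+\varepsilon I)\#(U^*PU+\varepsilon I)\downarrow P\#(U^*PU)$; no compactness step is needed. In fact you need no limit at all: $W$ exists without invertibility of $P$ since $md\ge d$, and the block-matrix characterization of $\#$ holds for singular arguments.
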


\vskip 5pt
\begin{proof} By Lemma \ref{polarq}, for every $A\in\bM_d$, the block matrix
$$
\begin{bmatrix} \sqrt{|A|^2+|A^*|^2} & A \\
A^*& \sqrt{|A|^2+|A^*|^2} 
\end{bmatrix}
$$
is positive semidefinite. Hence so is the sum of $m$ such matrices,
$$
\begin{bmatrix} \sum_k \sqrt{|A_k|^2+|A_k^*|^2} & \sum_k A_k \\
 \sum_k  A_k^*& \sum_k  \sqrt{|A_k|^2+|A_k^*|^2} 
\end{bmatrix}
$$
where each sum runs over $k\in\{1,\ldots, m\}$. Now consider the polar decomposition
$
 \sum_k A_k = U| \sum_k A_k |
$
and perform the unitary congruence
$$ 
\begin{bmatrix} U^*&0 \\ 0&I\end{bmatrix}
\begin{bmatrix} \sum_k \sqrt{|A_k|^2+|A_k^*|^2} & \sum_k A_k \\
 \sum_k  A_k^*& \sum_k  \sqrt{|A_k|^2+|A_k^*|^2} 
\end{bmatrix}
\begin{bmatrix} U&0 \\ 0&I\end{bmatrix}
$$
to get the positive semidefinite block-matrix:
$$
\begin{bmatrix} U^*\left\{\sum_k \sqrt{|A_k|^2+|A_k^*|^2}\right\}U& \left|\sum_k A_k\right|\\
\left| \sum_k  A_k\right|& \sum_k  \sqrt{|A_k|^2+|A_k^*|^2} 
\end{bmatrix}
$$
By the maximal characterization of the geometric mean,
$$
\left| \sum_k  A_k\right| \le 
\left(U^*\left\{\sum_k \sqrt{|A_k|^2+|A_k^*|^2}\right\}U\right)\#\left\{\sum_k \sqrt{|A_k|^2+|A_k^*|^2}\right\}
$$
Using $X\#Y=Y\#X$ and setting $V=U^*$ completes the proof.
\end{proof}

\vskip 5pt
From the matrix AGM inequality we obtain:

\vskip 5pt
\begin{cor}\label{cormean}  Let $\{A_k\}_{k=1}^m$ be in $\bM_d$. Then, 
$$
\left| \sum_{k=1}^m A_k\right| \le \frac{\sqrt{2}}{2}  \sum_{k=1}^m \left(|A_k|_{\qsym}+V|A_k|_{\qsym}V^*\right)
$$
for some unitary matrix $V\in\bM_d$.
\end{cor}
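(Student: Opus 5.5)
Corollary \ref{cormean} follows directly from Theorem \ref{main} combined with the arithmetic--geometric mean inequality for the matrix geometric mean. Put
$$
S=\sum_{k=1}^m |A_k|_{\qsym}\in\bM_d^+ .
$$
Theorem \ref{main} provides a unitary $V\in\bM_d$ such that
$$
\left|\sum_{k=1}^m A_k\right|\le \sqrt{2}\, S\#(VSV^*).
$$
We keep this same $V$ throughout.

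The matrix AGM inequality states that $X\#Y\le \frac{X+Y}{2}$ for all $X,Y\in\bM_d^+$. We apply it with $X=S$ and $Y=VSV^*$, obtaining
$$
S\#(VSV^*)\le \frac{S+VSV^*}{2}.
$$
If $S$ is singular, we first replace $S$ by $S+\varepsilon I$. Then $VSV^*$ becomes $VSV^*+\varepsilon I$, so the same $V$ still works. The inequality $X\#Y\le (X+Y)/2$ holds for the perturbed matrices, and we let $\varepsilon\to 0$ using continuity of $\#$ on decreasing limits. (This step can also be bypassed, since the AGM inequality is valid on all of $\bM_d^+$ under the standard definition of $\#$.)

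Multiplying by $\sqrt2$ and chaining the two inequalities gives
$$
\left|\sum_{k=1}^m A_k\right|\le \frac{\sqrt2}{2}\left(S+VSV^*\right).
$$
By linearity of $Z\mapsto VZV^*$, we have $VSV^*=\sum_k V|A_k|_{\qsym}V^*$. Hence
$$
S+VSV^*=\sum_{k=1}^m\left(|A_k|_{\qsym}+V|A_k|_{\qsym}V^*\right),
$$
which is exactly the statement.

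No step is a real obstacle. The only points to watch are that the unitary $V$ of Theorem \ref{main} depends only on the polar factor of $\sum_k A_k$, so it is unchanged when we pass to the arithmetic mean, and that the operator AGM inequality is available in the paper's framework through the reference \cite{HiP}.
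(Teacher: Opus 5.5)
Your proof is correct and follows the paper's route exactly: apply Theorem \ref{main} to get $\bigl|\sum_k A_k\bigr|\le\sqrt{2}\,S\#(VSV^*)$, then use the matrix AGM inequality $X\#Y\le (X+Y)/2$ with the same unitary $V$. The $\varepsilon$-perturbation for singular $S$ is harmless but not needed, since the AGM inequality holds on all of $\bM_d^+$.
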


\vskip 5pt
Corollary  \ref{cormean} clearly implies Zhang's Theorem \ref{Zhyb}. Besides the series of eigenvalue inequalities of the form \eqref{submajdef}, Corollary  \ref{cormean} adds some new ones, by application of the classical Weyl inequalities:

\vskip 5pt
\begin{cor}\label{cormean2} Let $\{A_k\}_{k=1}^m$ be in $\bM_d$. Then, for all $i,j=0,1,\ldots$,
$$
\lambda_{1+i+j}^{\downarrow } \left(\left| \sum_{k=1}^m A_k\right| \right) \le \frac{ \lambda_{1+i}^{\downarrow}\left( \sum_{k=1}^m |A_k|_{\qsym}\right) +
\lambda_{1+j}^{\downarrow}\left( \sum_{k=1}^m |A_k|_{\qsym}\right)}{\sqrt{2}}.
$$
\end{cor}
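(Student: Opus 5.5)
The plan is to derive Corollary \ref{cormean2} directly from Corollary \ref{cormean} by combining Weyl's inequalities with monotonicity of eigenvalues. Set
$$
S=\sum_{k=1}^m |A_k|_{\qsym}\in\bM_d^+ .
$$
Corollary \ref{cormean} provides a unitary $V$ with
$$
\left|\sum_{k=1}^m A_k\right| \le \frac{\sqrt{2}}{2}\left(S+VSV^*\right).
$$

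First, recall that $0\le X\le Y$ implies $\lambda_r^{\downarrow}(X)\le\lambda_r^{\downarrow}(Y)$ for every $r$. This is the Courant--Fischer min-max principle. Applying it to the displayed inequality gives
$$
\lambda^{\downarrow}_{1+i+j}\left(\left|\sum_{k=1}^m A_k\right|\right)\le \frac{\sqrt2}{2}\,\lambda^{\downarrow}_{1+i+j}(S+VSV^*).
$$
This holds whenever $1+i+j\le d$.

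Next, invoke Weyl's inequality for Hermitian matrices, $\lambda^{\downarrow}_{1+i+j}(X+Y)\le \lambda^{\downarrow}_{1+i}(X)+\lambda^{\downarrow}_{1+j}(Y)$. Take $X=S$ and $Y=VSV^*$. Unitary invariance of the spectrum gives $\lambda^{\downarrow}_{1+j}(VSV^*)=\lambda^{\downarrow}_{1+j}(S)$. Finally $\frac{\sqrt2}{2}=\frac{1}{\sqrt2}$, which gives the stated bound.

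It remains to handle the indices exceeding $d$, where the paper's convention sets $\lambda_r^{\downarrow}=0$. If $1+i+j>d$, the left-hand side is $0$. The right-hand side is always nonnegative because $S\ge0$, so the inequality is trivial in that case. Weyl's inequality is only needed when $1+i+j\le d$, and then $1+i\le d$ and $1+j\le d$ automatically.

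No step is a genuine obstacle. The only point requiring care is stating Weyl's inequality in the correct index form, together with the out-of-range convention, which I have dealt with above.
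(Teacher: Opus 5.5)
Your proposal is correct and follows the paper's own route: Corollary~\ref{cormean} combined with eigenvalue monotonicity, Weyl's inequality $\lambda^{\downarrow}_{1+i+j}(X+Y)\le\lambda^{\downarrow}_{1+i}(X)+\lambda^{\downarrow}_{1+j}(Y)$, and unitary invariance of the spectrum of $VSV^*$. Your handling of indices exceeding $d$ is a harmless detail that the paper leaves implicit.
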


\vskip 5pt
Another consequence of Theorem \ref{main} is  a weak log-majorization $\prec_{\mathrm{wlog}}$ on $\bM_d^+$ given in the next corollary. Recall that, for $A,B\in\bM_d^+$, we write $A\prec_{\mathrm{wlog}}B$ to mean that the following $d$ inequalities hold:
$$
\prod_{j=1}^p\lambda_{j}^{\downarrow} (A) \le \prod_{j=1}^p\lambda_{j}^{\downarrow} (B), \qquad p=1,\ldots,d.
$$
It is well known that $A\prec_{\mathrm{wlog}} B\Rightarrow A\prec_wB$.

\vskip 5pt
\begin{cor}  Let $\{A_k\}_{k=1}^m$ be in $\bM_d$. Then, 
$$
\left| \sum_{k=1}^m A_k\right| \prec_{\mathrm{wlog}} \sqrt{2} \sum_{k=1}^m |A_k|_{\qsym}.
$$
\end{cor}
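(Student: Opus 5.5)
We derive the statement from Theorem \ref{main}. Set $S=\sum_{k=1}^m |A_k|_{\qsym}$ and $Z=\sum_{k=1}^m A_k$. Theorem \ref{main} provides a unitary $V$ with $|Z|\le \sqrt{2}\, S\# (VSV^*)$. Since $X\le Y$ in $\bM_d^+$ implies $\lambda_j^{\downarrow}(X)\le\lambda_j^{\downarrow}(Y)$ for every $j$ (Weyl's monotonicity principle), it suffices to prove
$$
S\#(VSV^*)\prec_{\mathrm{wlog}} S .
$$

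The key tool is the classical log-majorization for the geometric mean, $A\# B\prec_{\mathrm{wlog}} A^{1/2}B^{1/2}$. Here the right-hand side is understood through its singular values, i.e.\ through $|A^{1/2}B^{1/2}|$. This is Ando's result: $\lambda_1(A\#B)\le \lambda_1(A^{1/2}B^{1/2})$, obtained via the block characterization of $\#$ or via $\|A\#B\|_\infty\le\|A^{1/2}B^{1/2}\|_\infty$ from the contraction $A^{-1/2}(A\#B)B^{-1/2}$. Combined with the antisymmetric tensor power technique and the identity $\Lambda^p(A\#B)=\Lambda^pA\#\Lambda^pB$, it extends to products of the top $p$ eigenvalues.

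Now apply this with $A=S$ and $B=VSV^*$, so that $B^{1/2}=VS^{1/2}V^*$. Horn's inequality for singular values of products then gives
$$
\prod_{j=1}^p s_j\bigl(S^{1/2}VS^{1/2}V^*\bigr)\le \prod_{j=1}^p s_j(S^{1/2})\,s_j(S^{1/2}) = \prod_{j=1}^p\lambda_j^{\downarrow}(S).
$$
Chaining this with Ando's bound yields $S\#(VSV^*)\prec_{\mathrm{wlog}} S$. Finally, multiply by $\sqrt{2}$ and use the eigenvalue monotonicity from the first paragraph to conclude.

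The main obstacle is the log-majorization $A\#B\prec_{\mathrm{wlog}}A^{1/2}B^{1/2}$. Its top-eigenvalue case is the following argument. Scale so that $\|A^{1/2}B^{1/2}\|_\infty=1$. Then $B\le A^{-1}$ when $A$ is invertible, and by monotonicity of $\#$ we get $A\#B\le A\#A^{-1}=I$. The invertibility assumption is removed by a limit argument. Passing to antisymmetric tensor powers then upgrades this top-eigenvalue bound to the products over the first $p$ eigenvalues.
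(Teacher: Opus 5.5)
Your proof is correct, and its overall structure matches the paper's. Both start from Theorem \ref{main}, use eigenvalue monotonicity, and finish with Horn's inequality applied to a product of the form $S^{1/2}\,M\,VS^{1/2}V^*$. The difference is in how you get from $S\#(VSV^*)$ to such a product.

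The paper uses the exact factorization $X\#Y=X^{1/2}WY^{1/2}$ with $W$ unitary, which follows at once from the polar decomposition of $Y^{1/2}X^{-1/2}$. Thus $S\#(VSV^*)=S^{1/2}W\cdot VS^{1/2}V^*$ has exactly the singular values of this product, and Horn applies immediately.

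You instead invoke Ando's log-majorization $A\#B\prec_{\mathrm{wlog}}|A^{1/2}B^{1/2}|$. Proving it needs the order argument $B\le A^{-1}\Rightarrow A\#B\le I$, a perturbation to invertible matrices, and antisymmetric tensor powers together with $\Lambda^p(A\#B)=\Lambda^pA\#\Lambda^pB$. Your lemma is sharper, since it compares with the specific product $S^{1/2}VS^{1/2}V^*$ rather than one with an unknown unitary inserted. However, that extra sharpness is thrown away as soon as you apply Horn, so here it buys nothing.

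The paper's factorization is both more elementary and more reusable. Because it is an identity rather than a log-majorization, it also feeds Weyl's individual inequalities $s_{1+i+j}(XY)\le s_{1+i}(X)\,s_{1+j}(Y)$, which give Corollary \ref{corprod}. A weak log-majorization alone cannot deliver such bounds. For example, $x=(2,2,2)\prec_{\mathrm{wlog}}y=(8,1,1)$, yet $x_3^2=4>y_2^2=1$.
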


\vskip 5pt
\begin{proof} We have
$$
 \left\{ \sum_{k=1}^m |A_k|_{\qsym}\right\} \#V\left\{
 \sum_{k=1}^m |A_k|_{\qsym}\right\}V^*=  \left\{ \sum_{k=1}^m |A_k|_{\qsym}\right\}^{1/2}\!\! W \!\cdot\! V\left\{ \sum_{k=1}^m |A_k|_{\qsym}\right\}^{1/2}\!\!\!V^*
$$
for some unitary $W\in\bM_d$. Using Theorem \ref{main}, we infer
\begin{equation}\label{W}
\left| \sum_{k=1}^m A_k\right| \le   \sqrt{2} \left\{ \sum_{k=1}^m |A_k|_{\qsym}\right\}^{1/2} \!\!W \!\cdot\!  V\left\{ \sum_{k=1}^m |A_k|_{\qsym}\right\}^{1/2}\!\!\!V^*.
\end{equation}
This combined with Horn's multiplicative inequalities for singular values yields 
$$
\prod_{j=1}^p\lambda_{j}^{\downarrow} \left(\left| \sum_{k=1}^m A_k\right| \right)
\le
\prod_{j=1}^p\lambda_{j}^{\downarrow} \left(\sqrt{2}\sum_{k=1}^m |A_k|_{\qsym}\right)
$$
for all $p=1,\ldots, d$. This is exactly our weak log-majorization relation $\prec_{\mathrm{wlog}}$. 
\end{proof}

\vskip 5pt
From another classical singular value inequality of Weyl, we obtain from \eqref{W}:

\vskip 5pt
\begin{cor}\label{corprod}  Let $\{A_k\}_{k=1}^m$ be in $\bM_d$. Then, for all $i,j=0,1,\ldots$,
$$
\lambda_{1+i+j}^{\downarrow 2} \left(\left| \sum_{k=1}^m A_k\right| \right) \le \, 2\, \lambda_{1+i}^{\downarrow}\left( \sum_{k=1}^m |A_k|_{\qsym}\right)
\lambda_{1+j}^{\downarrow}\left( \sum_{k=1}^m |A_k|_{\qsym}\right).
$$
\end{cor}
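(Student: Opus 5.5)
Set $S=\sum_{k=1}^m |A_k|_{\qsym}$ and start from inequality \eqref{W}, which says
$$
\left|\sum_{k=1}^m A_k\right| \le \sqrt{2}\, S^{1/2} W V S^{1/2} V^*
$$
for some unitaries $V,W$. The right-hand side is positive semidefinite, so it coincides with its own modulus. Its eigenvalues are therefore the singular values of the product $\sqrt2\, S^{1/2}\cdot (WVS^{1/2}V^*)$.

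The first step is to pass from the operator inequality to eigenvalues. If $0\le X\le Y$, Weyl's monotonicity principle gives $\lambda_{1+i+j}^{\downarrow}(X)\le \lambda_{1+i+j}^{\downarrow}(Y)$. Applying this with $X=|\sum_k A_k|$ and $Y$ the right-hand side of \eqref{W}, we get
$$
\lambda_{1+i+j}^{\downarrow}\left(\left|\sum_k A_k\right|\right)\le \sqrt2\, s_{1+i+j}\left(S^{1/2}\, WVS^{1/2}V^*\right).
$$

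The second step applies Weyl's singular value inequality for products,
$$
s_{1+i+j}(PQ)\le s_{1+i}(P)\,s_{1+j}(Q),
$$
with $P=S^{1/2}$ and $Q=WVS^{1/2}V^*$. Unitary invariance of singular values gives $s_{1+j}(Q)=\lambda^{\downarrow}_{1+j}(S)^{1/2}$, and clearly $s_{1+i}(P)=\lambda^{\downarrow}_{1+i}(S)^{1/2}$. Combining the two steps,
$$
\lambda_{1+i+j}^{\downarrow}\left(\left|\sum_k A_k\right|\right)\le \sqrt2\,\lambda^{\downarrow}_{1+i}(S)^{1/2}\lambda^{\downarrow}_{1+j}(S)^{1/2}.
$$
Both sides are nonnegative, so squaring gives the claim.

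When $1+i+j>d$, the convention $\lambda_j^{\downarrow}:=0$ makes the left side zero, so the statement is trivial. The Weyl product inequality itself holds whenever $1+i+j\le d$, and there the indices $1+i$ and $1+j$ are automatically at most $d$. No step is a real obstacle. The only point needing care is that the right-hand side of \eqref{W} is positive, so its eigenvalues really are the singular values of the product; this holds because it is the geometric mean $S\# VSV^*$.
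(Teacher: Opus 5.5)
Your proof is correct and follows the paper's route. You start from \eqref{W}, pass to eigenvalues by monotonicity (using that the right-hand side is the positive matrix $\sqrt{2}\,S\# VSV^*$), apply Weyl's product inequality $s_{1+i+j}(PQ)\le s_{1+i}(P)\,s_{1+j}(Q)$ with both factors unitarily equivalent to $S^{1/2}$, and square.
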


Note that  Corollary \ref{cormean2} follows from Corollary \ref{corprod} by applying the AGM inequality. A nice special case from either Corollary \ref{cormean} or Corollary \ref{corprod} is:

\vskip 5pt
\begin{cor}  Let $\{A_k\}_{k=1}^m$ be in $\bM_d$. Then, for all $j=0,1,\ldots$,
$$
\lambda_{1+2j}^{\downarrow } \left(\left| \sum_{k=1}^m A_k\right| \right) \le \sqrt{2}\, \lambda_{1+j}^{\downarrow}\left( \sum_{k=1}^m |A_k|_{\qsym}\right).
$$
\end{cor}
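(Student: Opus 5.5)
This is the special case $i=j$ of Corollary \ref{corprod}, or equivalently of Corollary \ref{cormean2}. I will read it off from Corollary \ref{cormean2}, since that one is already additive. Write $S=\sum_{k=1}^m |A_k|_{\qsym}$ and $T=\left|\sum_{k=1}^m A_k\right|$.

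Set $i=j$ in Corollary \ref{cormean2}. This gives
$$
\lambda_{1+2j}^{\downarrow}(T)\le \frac{2\,\lambda_{1+j}^{\downarrow}(S)}{\sqrt2}=\sqrt2\,\lambda_{1+j}^{\downarrow}(S),
$$
which is the claim.

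As a check, the same bound follows from Corollary \ref{corprod} with $i=j$. That corollary gives $\lambda_{1+2j}^{\downarrow 2}(T)\le 2\,\lambda_{1+j}^{\downarrow 2}(S)$. Taking square roots is legitimate because both sides are nonnegative eigenvalues of positive semidefinite matrices, and it yields the same inequality.

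Index ranges need only a brief remark. If $1+2j>d$, the left side is $0$ by the convention $\lambda_j^{\downarrow}:=0$ for $j>d$, so the inequality is trivial. If $1+2j\le d$, then $1+j\le d$ as well, and the corollaries apply directly. There is no genuine obstacle here; the only point to state explicitly is this zero convention, which the Weyl inequalities behind the earlier corollaries already respect.
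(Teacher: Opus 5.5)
Your proposal is correct and matches the paper: it takes the case $i=j$ of Corollary~\ref{corprod}, or equivalently of Corollary~\ref{cormean2}, which the paper derives from Corollary~\ref{cormean} via Weyl's inequality. Your handling of the index convention and of the square root is also fine.
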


\vskip 5pt
Next we turn to averages over a unitary orbit. It has  recently been shown \cite{BLaverage} that any  weak majorization relation $A\prec_w B$ in $\bM_d^+$ is equivalent to an inequality with an average over a unitary orbit:
 $$
A \le \frac{1}{d}\sum_{k=1}^d U_kBU_k^*
$$
for some unitary matrices $U_k\in\bM_d$. Therefore, Theorem \ref{Zhyb} is equivalent to:

\vskip 5pt
\begin{theorem}\label{Zhyb2}   Let $\{X_k\}_{k=1}^m$ be in $\bM_d$. Then, there exist unitary matrices  $\{V_i\}_{i=1}^d$  in $\bM_d$ such that
$$
\left|\sum_{k=1}^m X_k\right| \le \frac{\sqrt{2}}{d}\sum_{i=1}^d V_i\left\{ \sum_{k=1}^m \left|X_k\right|_{\qsym}\right\}V_i^*.
$$ 
\end{theorem}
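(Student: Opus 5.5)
The plan is to combine Zhang's Theorem \ref{Zhyb}, or equivalently the weak majorization implied by Corollary \ref{cormean}, with the averaging result of \cite{BLaverage} quoted just before the statement. Set $A=\left|\sum_{k=1}^m X_k\right|$ and $B=\sqrt{2}\sum_{k=1}^m |X_k|_{\qsym}$, both in $\bM_d^+$.

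\emph{Step 1.} Establish $A\prec_w B$. Corollary \ref{cormean} gives $A\le \frac{\sqrt2}{2}(S+VSV^*)$ with $S=\sum_k|X_k|_{\qsym}$. Weyl's monotonicity principle and Ky Fan's subadditivity of partial sums of eigenvalues then give, for each $p$,
$$\sum_{j\le p}\lambda_j^\downarrow(A)\le \tfrac{\sqrt2}{2}\Big(\sum_{j\le p}\lambda^\downarrow_j(S)+\sum_{j\le p}\lambda_j^\downarrow(VSV^*)\Big)=\sum_{j\le p}\lambda_j^\downarrow(B).$$
Alternatively, one may cite Theorem \ref{Zhyb} directly.

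\emph{Step 2.} Apply the result of \cite{BLaverage}: any relation $A\prec_w B$ in $\bM_d^+$ yields unitaries $V_1,\dots,V_d$ with $A\le \frac1d\sum_i V_iBV_i^*$. Substituting $B$ and pulling out the scalar $\sqrt2$ gives exactly the displayed inequality.

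To make the stated equivalence explicit, the converse direction is also easy. If the displayed inequality holds, then Ky Fan subadditivity together with unitary invariance of the spectrum gives $A\prec_w \frac1d\sum_i V_iBV_i^* \prec_w B$. This recovers Theorem \ref{Zhyb}.

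The only substantive input is the averaging theorem of \cite{BLaverage}, which we take as given. The point most in need of care is the direction of that theorem, namely that the unitary-orbit average uses exactly $d$ unitaries in $\bM_d$ and dominates $A$ in the Löwner order, not only in weak majorization. Beyond that, the argument is routine bookkeeping.
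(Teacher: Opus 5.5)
Your proposal is correct and follows the paper's argument: the paper obtains Theorem \ref{Zhyb2} by feeding the weak majorization of Zhang's Theorem \ref{Zhyb} into the orbit-average characterization of \cite{BLaverage}, and notes that the converse is immediate. Your alternative Step 1, which derives $A\prec_w B$ from Corollary \ref{cormean} via Weyl monotonicity and Ky Fan's inequality, is also valid and non-circular, and it matches the paper's remark that Corollary \ref{cormean} implies Theorem \ref{Zhyb}.
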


\vskip 5pt
That Theorem \ref{Zhyb2} implies Theorem \ref{Zhyb} is immediate. The converse follows from the orbit-average
characterization quoted above. Going back to Corollary
\ref{cormean}, we see that we can actually choose an average with only two elements in the unitary orbits of $\sqrt{2}\sum_{k=1}^m \left|X_k\right|_{\qsym}$, 
$$
\left|\sum_{k=1}^m X_k\right| \le \frac{\sqrt{2}}{2}\left(\sum_{k=1}^m \left|X_k\right|_{\qsym} + V\left\{ \sum_{k=1}^m \left|X_k\right|_{\qsym}\right\} V^*\right)
$$ 
for some unitary matrix $V\in\bM_d$.

\section{Related results for the maximal modulus}

We have focused on a hybrid inequality involving the usual modulus and its quadratic symmetrized version. Hybrid inequalities with the symmetric modulus and the quadratic one are also interesting; for instance \cite[Corollary 2.12]{BLaverage} 
states:

\vskip 5pt
\begin{theorem} Let $Z\in \bM_d$. Then there exist unitary matrices $\{V_i\}_{i=1}^d$  in $\bM_d$ such that 
$$
|Z|_{\qsym} \le \frac{ \sqrt{2}}{d}  \sum_{i=1}^d V_i |Z|_{\sym}V_i^*.
$$
The constant $\sqrt{2}$ cannot be diminished.
\end{theorem}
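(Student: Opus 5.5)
By the orbit-average characterization of \cite{BLaverage} quoted before Theorem \ref{Zhyb2}, the statement is equivalent to the submajorization
$$
|Z|_{\qsym} \prec_w \sqrt{2}\,|Z|_{\sym}.
$$
Since this relation is invariant under unitary congruence, I first put $Z$ in a normal form. Write $P=|Z|^2$ and $Q=|Z^*|^2$. The relation then reads
$$
\left(\frac{P+Q}{2}\right)^{1/2}\prec_w \frac{\sqrt{2}}{2}\left(P^{1/2}+Q^{1/2}\right),
$$
that is,
$$
(P+Q)^{1/2}\prec_w P^{1/2}+Q^{1/2}.
$$
So the plan is to prove this last submajorization for arbitrary $P,Q\in\bM_d^+$, and then transport it back through the orbit-average equivalence.

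For the key step I use a block-matrix trick. Take the $2d\times 2d$ matrix
$$
T=\begin{bmatrix} P^{1/2} & Q^{1/2}\\ 0&0\end{bmatrix}.
$$
Then $TT^*=(P+Q)\oplus 0$, so $(P+Q)^{1/2}\oplus 0=|T^*|$. Its singular values coincide with those of
$$
|T|=\left(\begin{bmatrix} P & P^{1/2}Q^{1/2}\\ Q^{1/2}P^{1/2}& Q\end{bmatrix}\right)^{1/2}.
$$
Alternatively, I write $T=T_1+T_2$ with
$$
T_1=\begin{bmatrix} P^{1/2}&0\\0&0\end{bmatrix},\qquad T_2=\begin{bmatrix}0&Q^{1/2}\\0&0\end{bmatrix}.
$$
The Ky Fan singular value triangle inequality $s(T_1+T_2)\prec_w s(T_1)+s(T_2)$ only gives $(P+Q)^{1/2}\prec_w P^{1/2}\oplus Q^{1/2}$ in $2d$ dimensions, which is not what I want. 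A better route is the concavity result of Bourin–Uchiyama type: for a concave $f\ge 0$ on $[0,\infty)$, $f(A+B)\le \frac{1}{2}\left(U(f(A)+f(B))U^*+V(f(A)+f(B))V^*\right)$ for some unitaries. Applied with $f(t)=\sqrt t$, $A=P$ and $B=Q$, this yields directly $\sqrt{P+Q}\prec_w \sqrt P+\sqrt Q$, which is the classical subadditivity for concave functions under symmetric norms. This is the step I rely on, and I would cite it or reproduce its standard proof via Rotfel'd's inequality.

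It remains to assemble the pieces. Dividing by $\sqrt2$ gives
$$
|Z|_{\qsym}=\frac{1}{\sqrt2}(P+Q)^{1/2}\prec_w \frac{1}{\sqrt2}\left(|Z|+|Z^*|\right)=\sqrt{2}\,|Z|_{\sym}.
$$
The orbit-average theorem of \cite{BLaverage} then produces unitaries $V_1,\dots,V_d$ with the stated operator inequality.

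For optimality of $\sqrt2$, I take $Z=E_{12}$, the nilpotent rank-one matrix. Then $|Z|=E_{22}$ and $|Z^*|=E_{11}$. This gives $|Z|_{\qsym}=\frac{1}{\sqrt2}(E_{11}+E_{22})$, whose trace is $\sqrt2$, while $|Z|_{\sym}=\frac12(E_{11}+E_{22})$ has trace $1$. Taking traces in any inequality $|Z|_{\qsym}\le \frac{c}{d}\sum_i V_i|Z|_{\sym}V_i^*$ forces $\sqrt2\le c$.

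The main obstacle is the concave subadditivity $\sqrt{P+Q}\prec_w\sqrt P+\sqrt Q$ for noncommuting $P,Q$. I expect it to be available from the literature (Ando–Zhan / Rotfel'd), and everything else is bookkeeping.
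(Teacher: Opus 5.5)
The paper gives no proof of this statement; it is quoted from \cite[Corollary 2.12]{BLaverage}. Your argument is correct, and it follows the pattern the paper itself uses to pass from Theorem \ref{Zhyb} to Theorem \ref{Zhyb2}: first establish a submajorization, then invoke the orbit-average characterization.

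With $P=|Z|^2$ and $Q=|Z^*|^2$, the reduction of $|Z|_{\qsym}\prec_w\sqrt{2}\,|Z|_{\sym}$ to $(P+Q)^{1/2}\prec_w P^{1/2}+Q^{1/2}$ is exact. The latter is the Ando--Zhan inequality for the operator monotone function $\sqrt{t}$, or the Bourin--Uchiyama inequality for nonnegative concave $f$. The operator form you quote does yield it: its right-hand side is an average of two unitary conjugates of $\sqrt{P}+\sqrt{Q}$, so every Ky Fan $k$-norm is controlled.

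The optimality argument with $Z=E_{12}$ and traces is right. It needs $d\ge 2$; for $d=1$ one has $|Z|=|Z^*|$ and the constant $1$ suffices.

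One correction concerns your plan for the key step: it cannot be reproduced from Rotfel'd's inequality. That inequality only gives $\mathrm{tr}\,(P+Q)^{1/2}\le \mathrm{tr}\,P^{1/2}+\mathrm{tr}\,Q^{1/2}$, which is the case $k=d$ of the submajorization. The orbit-average characterization needs all $k$, so you must cite Ando--Zhan or Bourin--Uchiyama, whose proofs use different tools. The abandoned block-matrix route and the unused ``normal form'' step do not affect the argument.
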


A simpler relation between the symmetric modulus and the quadratic symmetric modulus is established in the next proposition. It improves \eqref{compsqs}.
 By operator convexity of $t^2$,
$$
\left( \frac{|Z|+|Z^*|}{2}\right)^2\le \frac{|Z|^2+|Z^*|^2}{2}
$$
equivalently
 \begin{equation}\label{compsqs2}
|Z|^2_{\sym}\le |Z|^2_{\qsym}.
\end{equation} 
This inequality implies  \eqref{compsqs} as $\sqrt{t}$ is operator monotone. The following proposition   highlights the geometric significance of  \eqref{compsqs2}. 

\vskip 5pt
\begin{prop}\label{ub1}
Let  $Z\in \bM_d$ and let ${\mathcal{U}}$ be the  unit ball of $\bC^d$. Then,
$$
|Z|_{\sym}({\mathcal{U}})\subset |Z|_{\qsym}({\mathcal{U}}).
$$
\end{prop}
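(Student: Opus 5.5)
The plan is to use the standard fact that, for positive semidefinite $P,Q\in\bM_d^+$, the inclusion $P(\mathcal{U})\subset Q(\mathcal{U})$ is equivalent to $P^2\le Q^2$. With $P=|Z|_{\sym}$ and $Q=|Z|_{\qsym}$, this is exactly \eqref{compsqs2}, so the proposition follows once that equivalence is in place. I will prove the needed direction directly with Douglas' range-inclusion/factorization lemma, keeping the argument self-contained.

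\textbf{Step 1: factorization.} Set $P=|Z|_{\sym}$ and $Q=|Z|_{\qsym}$. By \eqref{compsqs2},
$$
P^*P=P^2\le Q^2=Q^*Q .
$$
Hence $\|Px\|\le\|Qx\|$ for every $x\in\bC^d$. Define a linear map $K$ on the range of $Q$ by $K(Qx)=Px$.
\begin{itemize}
\item $K$ is well defined: if $Qx=Qy$, then $\|P(x-y)\|\le\|Q(x-y)\|=0$, so $Px=Py$.
\item $K$ is contractive on the range of $Q$, by the same norm inequality.
\end{itemize}
Extend $K$ by $0$ on $(\mathrm{ran}\,Q)^\perp$. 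This gives a contraction $K\in\bM_d$ with $KQ=P$.

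\textbf{Step 2: take adjoints.} Since $P$ and $Q$ are selfadjoint, $KQ=P$ gives $P=QK^*$. Because $K^*$ is a contraction, $K^*(\mathcal{U})\subset\mathcal{U}$. Therefore
$$
P(\mathcal{U})=Q K^*(\mathcal{U})\subset Q(\mathcal{U}).
$$
This is the claimed inclusion $|Z|_{\sym}(\mathcal{U})\subset |Z|_{\qsym}(\mathcal{U})$.

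\textbf{Step 3: the converse (optional).} To show that the statement is exactly a geometric rewriting of \eqref{compsqs2}, one can check the reverse implication. Suppose $P(\mathcal{U})\subset Q(\mathcal{U})$. Then for each $x\in\mathcal{U}$ there is $y\in\mathcal{U}$ with $Px=Qy$, so
$$
|\langle Px,v\rangle|=|\langle y,Qv\rangle|\le\|Qv\|\quad\text{for all }v .
$$
Taking the supremum over $x\in\mathcal{U}$ gives $\|Pv\|\le\|Qv\|$, that is, $P^2\le Q^2$.

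\textbf{Main obstacle.} The only delicate point is non-invertibility of $Q$, where $Q^{-1}$ cannot be used. Defining $K$ only on $\mathrm{ran}\,Q$, and getting well-definedness from the kernel inclusion $\ker Q\subset\ker P$ implied by $P^2\le Q^2$, avoids any limit argument. The remaining steps are routine.
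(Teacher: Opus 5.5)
Your proof is correct and is essentially the paper's argument: both derive the inclusion from \eqref{compsqs2} by showing that $Q^{-1}P$ is a contraction, so that $P=Q(Q^{-1}P)$ maps $\mathcal{U}$ into $Q(\mathcal{U})$; your $K^*$ is exactly $Q^{+}P$ (Moore--Penrose inverse), i.e.\ the paper's $B^{-1}A$ on the range of $B$. The only difference is how a singular $Q$ is handled: you build $K$ on $\mathrm{ran}\,Q$ via a Douglas-type factorization, whereas the paper restricts to the range of $B$ and assumes invertibility, and your optional converse matches the equivalence the paper states.
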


\vskip 5pt
\begin{proof} Thanks to \eqref{compsqs2} it suffices to show that, for $A,B\in\bM_d^+$, we have the equivalence
$$
A^2\le B^2 \iff A({\mathcal{U}}) \subset B({\mathcal{U}}).
$$
Since each side ensures that the range of $A$ is included in the range of $B$, restricting our operators to the range of $B$ if necessary, we may assume that $B$ is invertible. Then
$$
A^2\le B^2 \iff B^{-1}A^2B^{-1} \le I \iff \| B^{-1}A\|_{\infty} \le 1 \iff B^{-1}A({\mathcal{U}}) \subset{\mathcal{U}}, 
$$equivalently, $A({\mathcal{U}}) \subset B({\mathcal{U}})$.
\end{proof}

The symmetric modulus $|Z|_{\sym}$ is the $\ell^1$-mean of  $|Z^*|$ and $|Z|$, and 
the quadratic symmetric modulus $|Z|_{\qsym}$ is their $\ell^2$-mean. 

There exists another interesting symmetrized modulus, which corresponds to a $\ell^{\infty}$-mean.  This is the maximal modulus
 $| Z|_{\vee}$ which corresponds to the maximum of $|Z|$ and $|Z^*|$ for a natural order, Olson's order \cite{Ols}.  This order endows the positive  matrices with a lattice structure.  We may define the maximal modulus of $Z\in\bM_d$ with Kato's formula \cite{K},
$$
|Z|_{\vee}:=\lim_{n\to \infty} (|Z|^n +|Z^*|^n)^{1/n}.
$$
A concise treatment of the Olson order, and the proof of this  formula is given in \cite{BLaverage}.
We think that these three symmetric moduli   are especially  important ones.
Our next result may be regarded as a polar decomposition for the maximal modulus.

\begin{lemma}\label{polarmax}
Let  $A\in \bM_d$. Then, for some contraction $K\in\bM_d$,
$$
A=|A|_{\vee}^{1/2} K |A|_{\vee}^{1/2}.
$$
\end{lemma}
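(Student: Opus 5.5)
Our plan is to mimic the proof of Lemma \ref{polarq}, replacing the operator monotonicity of $t\mapsto t^{-1/2}$ by the defining property of the maximal modulus: $|A|\le_{\mathrm{ols}} |A|_{\vee}$ and $|A^*|\le_{\mathrm{ols}} |A|_{\vee}$ in Olson's spectral order. By a limit argument (replace $A$ by invertible approximants $A_\varepsilon$ and use continuity of $|\cdot|_\vee$ together with compactness of the unit ball of contractions), we will assume $A$ invertible, so that $|A|_\vee$ is invertible as well, since $|A|_\vee\ge |A|$ in the usual order, which is weaker than Olson's. We set
$$
K=|A|_{\vee}^{-1/2}A|A|_{\vee}^{-1/2}
$$
and must show $\|K\|_\infty\le 1$.

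Using $A=|A^*|^{1/2}U|A|^{1/2}$, we write $K=XY$ with $X=|A|_\vee^{-1/2}|A^*|^{1/2}$ and $Y=U|A|^{1/2}|A|_\vee^{-1/2}$. Then
$$
\|K\|_\infty^2\le \| |A^*|^{1/2}|A|_\vee^{-1}|A^*|^{1/2}\|_\infty\,\| |A|^{1/2}|A|_\vee^{-1}|A|^{1/2}\|_\infty .
$$
It therefore suffices to show that $P\le_{\mathrm{ols}} Q$ with $Q$ invertible implies $P^{1/2}Q^{-1}P^{1/2}\le I$, equivalently $\|Q^{-1/2}P^{1/2}\|_\infty\le1$, equivalently $P\le Q$. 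Olson's order gives $P^n\le Q^n$ for every $n\ge1$; in particular $P\le Q$ in the Löwner order, and each factor above is at most $1$. Alternatively, one can use $|A|^n\le |A|^n+|A^*|^n$ together with operator monotonicity of $t^{1/n}$ to get $|A|\le (|A|^n+|A^*|^n)^{1/n}$, and then pass to the limit with Kato's formula. This second route avoids invoking Olson's order at all and needs only Kato's formula as stated.

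The main obstacle is precisely justifying $|A|\le|A|_\vee$ and $|A^*|\le|A|_\vee$. We will go through Kato's formula: for each $n\ge1$, the relation $|A|^n\le |A|^n+|A^*|^n$ and operator monotonicity of $t\mapsto t^{1/n}$ give $|A|\le(|A|^n+|A^*|^n)^{1/n}$, and letting $n\to\infty$ preserves the inequality since the cone $\bM_d^+$ is closed. The same argument gives the bound for $|A^*|$. Invertibility of $A$ then makes $|A|_\vee\ge|A|$ invertible, so $K$ is well defined.

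For the limit step, we take $A_\varepsilon\to A$ invertible with contractions $K_\varepsilon$, and extract a convergent subsequence $K_\varepsilon\to K$. This requires continuity of $Z\mapsto |Z|_\vee$. If continuity is unavailable, we would instead apply a range argument: restrict to the support of $|A|_\vee$, which contains the ranges of both $|A|$ and $|A^*|$, and use the Moore--Penrose inverse in place of $|A|_\vee^{-1/2}$. This avoids approximation entirely.
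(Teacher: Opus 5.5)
Your proof is correct only via the range argument you list as a fallback; the approximation route, as stated, fails because $Z\mapsto |Z|_{\vee}$ is discontinuous. For unit vectors $u\neq v$, $Z=uv^*$ has $|Z|=vv^*$ and $|Z^*|=uu^*$. Kato's formula then makes $|Z|_{\vee}$ the projection onto $\mathrm{span}\{u,v\}$, of rank $2$, whereas $|uu^*|_{\vee}=uu^*$ has rank $1$.

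The failure also shows up along invertible approximants. Take orthonormal bases $\{u,x\}$ and $\{v_\varepsilon,y_\varepsilon\}$ of $\bC^2$ with $v_\varepsilon\to u$ and $v_\varepsilon\notin\bC u$. The invertible matrices $A_\varepsilon=uv_\varepsilon^*+\varepsilon^2xy_\varepsilon^*$, $0<\varepsilon<1$, tend to $uu^*$, but $|A_\varepsilon|_{\vee}=I$ for every $\varepsilon$. Your limit argument would therefore only give $uu^*=I\cdot K\cdot I$, a factorization through the wrong positive matrix.

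The range argument does work, and it is in substance the paper's own reduction. The support $S$ of $|A|_{\vee}$ equals $\mathrm{ran}\,|A|+\mathrm{ran}\,|A^*|=\mathrm{ran}(|A|+|A^*|)$. Let $P$ be the projection onto $S$. Since $\ker|A|_{\vee}\subset\ker A\cap\ker A^*$, you get $A=PAP$. Hence $K=(|A|_{\vee}^{1/2})^{+}A(|A|_{\vee}^{1/2})^{+}$ satisfies $|A|_{\vee}^{1/2}K|A|_{\vee}^{1/2}=A$. Your Cauchy--Schwarz estimate, combined with $|A|,|A^*|\le|A|_{\vee}$, then gives $\|K\|_\infty\le 1$ exactly as in the invertible case.

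Compared with the paper, the difference is where the limit $n\to\infty$ is taken. The paper restricts to $\mathrm{ran}(|A|+|A^*|)$ and sets $Q_n=(|A|^n+|A^*|^n)^{1/n}$. It proves that each $C_n=Q_n^{-1/2}AQ_n^{-1/2}$ is a contraction, using that $t\mapsto t^{-1/n}$ is operator decreasing, and then extracts a convergent subsequence of $\{C_n\}$. You instead pass to the limit in the L\"owner inequalities $|A|,|A^*|\le Q_n$ to get $|A|,|A^*|\le|A|_{\vee}$, and then run the argument of Lemma \ref{polarq} once.

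Your version gives an explicit $K$ with no compactness step. It also isolates the principle behind both polar decompositions: any $Q\in\bM_d^+$ with $Q\ge|A|$ and $Q\ge|A^*|$ yields $A=Q^{1/2}KQ^{1/2}$ with $\|K\|_\infty\le 1$. The paper's version keeps the proof formally parallel to that of Lemma \ref{polarq}, at the cost of the subsequence extraction.
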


\vskip 5pt
\begin{proof} By restricting to the range of $|A|+|A^*|$ if necessary, we may assume that $|A|^n +|A^*|^n$ is invertible for each integer $n>0$. It then suffices to show that
$$
C_n=(|A|^n+|A^*|^n)^{-1/2n}A(|A|^n+|A^*|^n)^{-1/2n}
$$
is a contraction. Indeed $(|A|^n+|A^*|^n)^{-1/2n}\to |A|_{\vee}^{-1/2}$ as $n\to\infty$, and we may extract from $\{C_n\}$ a subsequence converging to a contraction $K$. The proof is then similar to that  of Lemma \ref{polarq}. The polar decomposition $A=|A^*|^{1/2} U|A|^{1/2}$ gives
$$
C_n=(|A|^n+|A^*|^n)^{-1/2n}\left(|A^*|^{1/2} U|A|^{1/2}\right)(|A|^n+|A^*|^n)^{-1/2n}.
$$
Thus, by the Cauchy-Schwarz inequality $\| XY\|_{\infty}\le \| X^*X\|_{\infty}^{1/2} \| YY^*\|_{\infty}^{1/2}$ applied to
$$
X=(|A|^n+|A^*|^n)^{-1/2n}|A^*|^{1/2}, \qquad Y= U|A|^{1/2}(|A|^n+|A^*|^n)^{-1/2n},
$$
we obtain
$$
\| C_n\|_{\infty}^2 \le  \| |A^*|^{1/2}(|A|^n+|A^*|^n)^{-1/n}|A^*|^{1/2}\|_{\infty} \| |A|^{1/2}(|A|^n+|A^*|^n)^{-1/n}|A|^{1/2}\|_{\infty}.
$$
Now, since $t\mapsto t^{-1/n}$ is operator decreasing, we have
$$
|A^*|^{1/2}(|A|^n+|A^*|^n)^{-1/n}|A^*|^{1/2} \le |A^*|^{1/2}(|A^*|^n)^{-1/n}|A^*|^{1/2} = I,
$$
and 
$$
|A|^{1/2}(|A|^n+|A^*|^n)^{-1/n}|A|^{1/2} \le |A|^{1/2}(|A|^n)^{-1/n}|A|^{1/2} =I.
$$
The last three inequalities together give
$
\| C_n\|_{\infty}^2 \le 1.
$
\end{proof}

Having at our disposal Lemma \ref{polarmax} we see that the block matrix
$$
\begin{bmatrix} |A|_{\vee}& A \\
A^*& |A|_{\vee} 
\end{bmatrix}
$$
is positive semidefinite. Arguing as in the quadratic symmetric case we thus obtain the following results:

\vskip 5pt
\begin{theorem}  Let $\{A_k\}_{k=1}^m$ be in $\bM_d$. Then, 
$$
\left| \sum_{k=1}^m A_k\right| \le  \left\{ \sum_{k=1}^m |A_k|_{\vee}\right\} \#V\left\{
 \sum_{k=1}^m |A_k|_{\vee}\right\}V^*
$$
for some unitary matrix $V\in\bM_d$.
\end{theorem}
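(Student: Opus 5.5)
The plan is to mimic the proof of Theorem \ref{main}, with Lemma \ref{polarmax} taking the place of Lemma \ref{polarq}. First I will show that for each $A\in\bM_d$ the block matrix
$$
\begin{bmatrix} |A|_{\vee}& A \\ A^*& |A|_{\vee} \end{bmatrix}
$$
is positive semidefinite. Write $A=|A|_{\vee}^{1/2}K|A|_{\vee}^{1/2}$ with $\|K\|_\infty\le 1$, as Lemma \ref{polarmax} allows. Since $\begin{bmatrix} I & K\\ K^* & I\end{bmatrix}\ge 0$ exactly when $K$ is a contraction, congruence by $\mathrm{diag}(|A|_{\vee}^{1/2},|A|_{\vee}^{1/2})$ gives the claim.

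Next I add these blocks for $A=A_1,\dots,A_m$. Setting $S=\sum_k |A_k|_{\vee}$, this shows that
$$
\begin{bmatrix} S & \sum_k A_k\\ \sum_k A_k^* & S\end{bmatrix}\ge 0.
$$
Take the polar decomposition $\sum_k A_k=U|\sum_k A_k|$ with $U$ unitary. Congruence by $\mathrm{diag}(U^*,I)$ then yields the positive block matrix
$$
\begin{bmatrix} U^*SU & |\sum_k A_k|\\ |\sum_k A_k| & S\end{bmatrix}.
$$

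Finally, I apply the maximal characterization of the geometric mean: $X\#Y$ is the largest Hermitian $H$ with $\begin{bmatrix} X & H\\ H & Y\end{bmatrix}\ge 0$. This gives $|\sum_k A_k|\le (U^*SU)\#S = S\#(U^*SU)$. Taking $V=U^*$ finishes the proof.

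The main obstacle is the non-invertible case. The maximal characterization is cleanest for positive definite $X,Y$, so I would replace $S$ by $S+\varepsilon I$. The block matrix stays positive and $U^*(S+\varepsilon I)U=U^*SU+\varepsilon I$. Letting $\varepsilon\to0$ and using continuity of $\#$ under decreasing limits, with $U$ fixed, gives the result. All the positivity work is already contained in Lemma \ref{polarmax}, so nothing else is delicate.
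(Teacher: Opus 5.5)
Your proof is correct and follows the same route as the paper. You get block positivity of $\begin{bmatrix} |A|_{\vee} & A\\ A^* & |A|_{\vee}\end{bmatrix}$ from Lemma \ref{polarmax}, sum over $k$, apply the unitary congruence coming from the polar decomposition of $\sum_k A_k$, and use the maximal characterization of $\#$ with $V=U^*$; your $\varepsilon$-regularization for singular $S$ is a harmless detail the paper leaves implicit.
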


\vskip 5pt
\begin{cor}\label{cormeanmax}  Let $\{A_k\}_{k=1}^m$ be in $\bM_d$. Then, 
$$
\left| \sum_{k=1}^m A_k\right| \le \frac{  \sum_{k=1}^m \left(|A_k|_{\vee}+V|A_k|_{\vee}V^*\right)}{2}
$$
for some unitary matrix $V\in\bM_d$.
\end{cor}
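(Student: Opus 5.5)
This corollary follows from the preceding theorem for the maximal modulus in the same way that Corollary \ref{cormean} follows from Theorem \ref{main}. Set $S=\sum_{k=1}^m |A_k|_{\vee}\in\bM_d^+$. The preceding theorem gives a unitary $V\in\bM_d$ with
$$
\left|\sum_{k=1}^m A_k\right| \le S\# VSV^*.
$$
I keep this same $V$ throughout.

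Next I apply the matrix arithmetic--geometric mean inequality $X\#Y\le \frac{X+Y}{2}$ for $X,Y\in\bM_d^+$. If one wants a self-contained route, assume first that $X$ is invertible. Then
$$
X\#Y=X^{1/2}\left(X^{-1/2}YX^{-1/2}\right)^{1/2}X^{1/2},
$$
and the scalar inequality $t^{1/2}\le (1+t)/2$, applied through functional calculus to $X^{-1/2}YX^{-1/2}$ and then conjugated by $X^{1/2}$, gives the claim. The general case follows by replacing $X$ with $X+\varepsilon I$ and letting $\varepsilon\to0$, using monotonicity and continuity of $\#$.

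With $X=S$ and $Y=VSV^*$ this yields
$$
\left|\sum_{k=1}^m A_k\right|\le \frac{S+VSV^*}{2}.
$$
Since conjugation by $V$ is linear, $VSV^*=\sum_k V|A_k|_{\vee}V^*$, which is exactly the stated inequality.

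The only step needing any care is the AGM inequality for the geometric mean in the singular case. This is standard and is handled by the $\varepsilon$-regularization above, so no real obstacle arises. All the substance lies in the preceding theorem, which itself rests on Lemma \ref{polarmax}: the lemma makes the block matrix with diagonal entries $|A_k|_{\vee}$ and off-diagonal entries $A_k, A_k^*$ positive semidefinite, and the maximal characterization of $\#$ then gives the theorem.
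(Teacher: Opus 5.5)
Your proposal is correct and follows the paper's route: the paper obtains this corollary from the preceding geometric-mean theorem for $|\cdot|_{\vee}$ by applying the matrix AGM inequality $X\#Y\le (X+Y)/2$ with the same unitary $V$, exactly as Corollary \ref{cormean} follows from Theorem \ref{main}. Your $\varepsilon$-regularized derivation of the AGM step is a valid self-contained justification of what the paper simply cites.
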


\vskip 5pt
\begin{cor}  Let $\{A_k\}_{k=1}^m$ be in $\bM_d$. Then, 
$$
\left| \sum_{k=1}^m A_k\right| \prec_{\mathrm{wlog}}  \sum_{k=1}^m |A_k|_{\vee}.
$$
\end{cor}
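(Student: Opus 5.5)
We follow the route of the weak log-majorization corollary of Section 2, replacing $\sqrt{2}|A_k|_{\qsym}$ by $|A_k|_{\vee}$. Set $S=\sum_{k=1}^m |A_k|_{\vee}$. Lemma~\ref{polarmax} shows that each block matrix with diagonal entries $|A_k|_{\vee}$ and off-diagonal entries $A_k, A_k^*$ is positive semidefinite, and hence so is their sum. The preceding theorem for the maximal modulus then gives a unitary $V$ with
$$
\left|\sum_{k=1}^m A_k\right| \le S\,\#\,VSV^*.
$$

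Next we factor the geometric mean, as in the proof of \eqref{W}. By the definition $X\#Y = X^{1/2}(X^{-1/2}YX^{-1/2})^{1/2}X^{1/2}$, the matrix $T=S\#VSV^*$ satisfies $TS^{-1}T = VSV^*$ when $S$ is invertible. Equivalently, $(S^{-1/2}TS^{-1/2})(S^{-1/2}TS^{-1/2}) = S^{-1/2}VSV^*S^{-1/2}$. Taking square roots in polar form, $T = S^{1/2}W VS^{1/2}V^*$ for some unitary $W$: indeed $S^{-1/2}T$ has the same modulus squared as $VS^{1/2}V^*$. When $S$ is singular, we reduce to the invertible case by a limit argument, replacing $S$ by $S+\varepsilon I$ and using compactness of the unitary group, or by restricting to ranges. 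This yields
$$
\left|\sum_{k=1}^m A_k\right| \le S^{1/2}\, W\, V S^{1/2} V^*.
$$

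Finally we pass to eigenvalues. Since $0\le P\le Q$ implies $\lambda_j^{\downarrow}(P)\le\lambda_j^{\downarrow}(Q)$, the eigenvalues of $\left|\sum_{k=1}^m A_k\right|$ are dominated by those of the positive matrix $T$. The eigenvalues of $T$ are its singular values, and $T$ is the product $S^{1/2}W\cdot VS^{1/2}V^*$. Horn's multiplicative inequalities then give, for $p=1,\ldots,d$,
$$
\prod_{j=1}^p \lambda_j^{\downarrow}(T) \le \prod_{j=1}^p \lambda_j^{\downarrow}(S^{1/2})\,\lambda_j^{\downarrow}(S^{1/2}) = \prod_{j=1}^p \lambda_j^{\downarrow}(S).
$$
Chaining this with the monotonicity step gives the claimed $\prec_{\mathrm{wlog}}$ relation.

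The only delicate point is the factorization of $S\#VSV^*$ as $S^{1/2}\,W\,VS^{1/2}V^*$ with $W$ unitary, particularly in the non-invertible case. Everything else is a direct transcription of the quadratic case.
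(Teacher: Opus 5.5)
Your proposal is correct and follows the paper's route exactly: the block matrices are positive by Lemma~\ref{polarmax}, which gives the bound $\left|\sum_{k} A_k\right|\le S\#VSV^*$. You then factor $S\#VSV^*=S^{1/2}W\cdot VS^{1/2}V^*$ as in \eqref{W}, and finish with Weyl monotonicity and Horn's multiplicative inequalities. Your Riccati-equation argument for that factorization, together with the $S+\varepsilon I$ reduction, fills in a step the paper leaves implicit; in the singular case you can even apply Horn directly to $S+\varepsilon I$ and let $\varepsilon\to 0$.
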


\section{Concluding remarks}

This note is a small contribution to the study of symmetric moduli. We believe that this line of research will shed further insight on matrix inequalities and decompositions. In fact, there exists already a series of impressive results, in particular in Zhang's papers \cite{Ztriang}, \cite{Zsharp}. We may also mention our contributions \cite{BLtriang}, \cite{BLaverage}, in which several results and questions are proposed.

These symmetric moduli are natural  representatives in $\bM_d^+$ for matrices in $\bM_d$. Results such as Zhang's Theorem \ref{thZ} or the maximal polar decomposition Lemma \ref{polarmax} invite  further investigation.

\vskip 15pt
\noindent
Jean-Christophe Bourin

\noindent
Université Marie et Louis Pasteur, CNRS, LmB (UMR 6623), F-25000 Besançon, France.

\noindent

\noindent

\noindent
Email: jcbourin@univ-fcomte.fr

  \vskip 15pt\noindent
Eun-Young Lee

\noindent
Department of Mathematics, KNU-Center for Nonlinear Dynamics,

\noindent
Kyungpook National University,

\noindent
Daegu 702-701, Korea.

\noindent
 Email: eylee89@knu.ac.kr


\begin{thebibliography}{99}
{\small

 


\bibitem{BLsym} J.-C.\ Bourin and E.-Y.\ Lee, Matrix parallelogram laws and symmetric moduli, {\it Internat.\ J.\ Math.} 37
(2026), no.\ 3, 2650018.

\bibitem{BLtriang} J.-C.\ Bourin and E.-Y.\ Lee, Triangle inequalities for the operator symmetric modulus, {\it Proc. Amer. Math. Soc.}, in press, arXiv:2602.19607.


\bibitem{BLaverage} J.-C.\ Bourin and E.-Y.\ Lee, Averages over matrix unitary orbits  and  spectral order, preprint, arXiv:2606.15624, 2026.

\bibitem{HiP} F.\ Hiai, D.\ Petz, Introduction to matrix analysis and applications. Universitext. Springer, Cham; Hindustan Book Agency, New Delhi, 2014.



\bibitem{K} 
T. Kato, Spectral order and a matrix limit theorem, {\it Linear  Multilinear Algebra} 8 (1979),
15--19.

\bibitem{L} E.-Y. Lee, How to compare the absolute values of operator sums and the sums of absolute values?  {\it Oper.\ Matrices} 6 (2012), no. 3, 613--619.

\bibitem{Ols} M.\ P.\ Olson, The selfadjoint operators of a von Neumann algebra form a conditionally complete
lattice, {\it Proc. Amer. Math. Soc.} 28 (1971), 537--544.




\bibitem{T}   R.\ C.\ Thompson, Convex and concave functions of singular
values of matrix sums, {\it Pacific J. Math.}\  66 (1976), 285--290.





\bibitem{Ztriang} T.\ Zhang, An operator triangle inequality for the quadratic symmetric modulus, preprint, arXiv:2602.01463, 2026.

\bibitem{Zsharp} T.\ Zhang, Operator symmetric moduli and sharp triangle inequalities, preprint,  arXiv:2603.01046, 2026.
}

\end{thebibliography}
\end{document}